\documentclass{amsart}

\usepackage{amsmath,amssymb,amsthm, mathrsfs, mathtools, bm, amsfonts}
\usepackage{mathabx}
\usepackage{amscd,mathtools}
\usepackage[utf8]{inputenc}
\usepackage[english]{babel}
\usepackage{cite}
\usepackage{color}
\usepackage[pagebackref,colorlinks,citecolor=blue,linkcolor=red]{hyperref}

\usepackage{float}
\usepackage{tikz}
\usepackage{lmodern}
\usetikzlibrary{decorations.pathmorphing}

\usepackage{multicol}
\usetikzlibrary{cd}
\usepackage{geometry}

\usepackage{comment}
\usepackage{mathtools}

\usepackage{csquotes}

\usepackage{amsthm}

\usepackage{cleveref}

\usepackage{longtable}

\newtheorem{theorem}{Theorem}[section]
  \newtheorem{proposition}[theorem]{Proposition}

\newtheorem{corollary}[theorem]{Corollary}
\newtheorem{lemma}[theorem]{Lemma}
\newtheorem{claim}[theorem]{Claim}

\theoremstyle{definition}
\newtheorem{defn}{Definition}

\newtheorem{remark}[theorem]{Remark}
\newtheorem{example}[theorem]{Example}

\usepackage{xcolor}
\usepackage{fancyvrb}
\definecolor{Green}{RGB}{0,180,60}
\definecolor{Purple}{RGB}{150,0,250}

\usepackage{listings}

\usepackage{xparse}

\NewDocumentCommand{\gap}{v}{%
\textbf{\texttt{\textcolor{blue}{#1}}}%
}
\NewDocumentCommand{\codeinput}{v}{%
\texttt{\textcolor{red}{#1}%
}}
\NewDocumentCommand{\codeoutput}{v}{\texttt{\textcolor{black}{#1}}}
\NewDocumentCommand{\codecomment}{v}{\texttt{\textcolor{purple}{#1}}}
\usepackage{array} 

\usepackage{soul} 

\title{Balanced Spanning Trees for Triangular Strip Lattices}

\author{Saugat Dhakal}
\address{Rice University}
\email{sd200@rice.edu}

\author{Long Nguyen}
\address{Rice University}
\email{ln50@rice.edu}

\author{Yandi Wu}
\address{Rice University}
\email{yandi.wu@rice.edu}

\begin{document}

\maketitle
\begin{abstract}
    A balanced spanning tree is a spanning tree that contains an edge whose removal partitions the vertices into exactly two connected subtrees of equal size. In this paper, we establish explicit recurrence relations for the number of spanning trees in $2 \times n$ \textit{triangular strip lattices}—obtained by adding a diagonal edge to each square of a $2 \times n$ grid graph—generalizing combinatorial counting techniques introduced by Raff~\cite{raff}.  We then adapt arguments of Gallagher and Tapp \cite{gal25} to count balanced spanning trees of arbitrary triangular strip lattices. We establish sharp asymptotic bounds for the proportion of balanced spanning trees as $n \to \infty$. Finally, we determine the probability that a spanning tree of a $2\times n$ triangular strip lattice chosen uniformly at random is balanced as $n \to \infty$.
\end{abstract}




\section{Introduction}

\emph{Balanced spanning trees} (Definition \ref{defn: Balanced_spanning_tree}) have recently attracted attention because of their applications to computational redistricting. One of the most prominent redistricting algorithms is ReCom, which has been observed empirically to generate districting plans with compact districts \cite{deford2021recombination,clelland2021compactness}. More recently, Procaccia and Tucker-Foltz \cite{procaccia2022compact} provided a theoretical explanation for this phenomenon by showing that the spanning-tree-based sampling procedure used by ReCom favors redistricting plans with shorter boundaries, which are more compact. This connection between spanning trees and compact districts is rooted in the way ReCom constructs new districting plans. 

Specifically, ReCom operates on the dual graph of a geographic region, where each vertex represents a geographic unit (such as a census block, precinct, or county) and edges connect adjacent geographic units (see Figure \ref{fig:wvdual}). At each iteration, the algorithm temporarily merges two adjacent districts, samples a spanning tree of the corresponding subgraph, and removes an edge whose deletion yields two connected components with nearly equal populations; that is, it seeks balanced spanning trees in the sense of Definition~\ref{defn: Balanced_spanning_tree} \cite{deford2021recombination}. The two components then become the new districts. Consequently, understanding the prevalence of balanced spanning trees may provide theoretical insight into the behavior of ReCom and related spanning-tree-based redistricting algorithms.

\begin{figure}
    \centering    \includegraphics[width=0.5\linewidth]{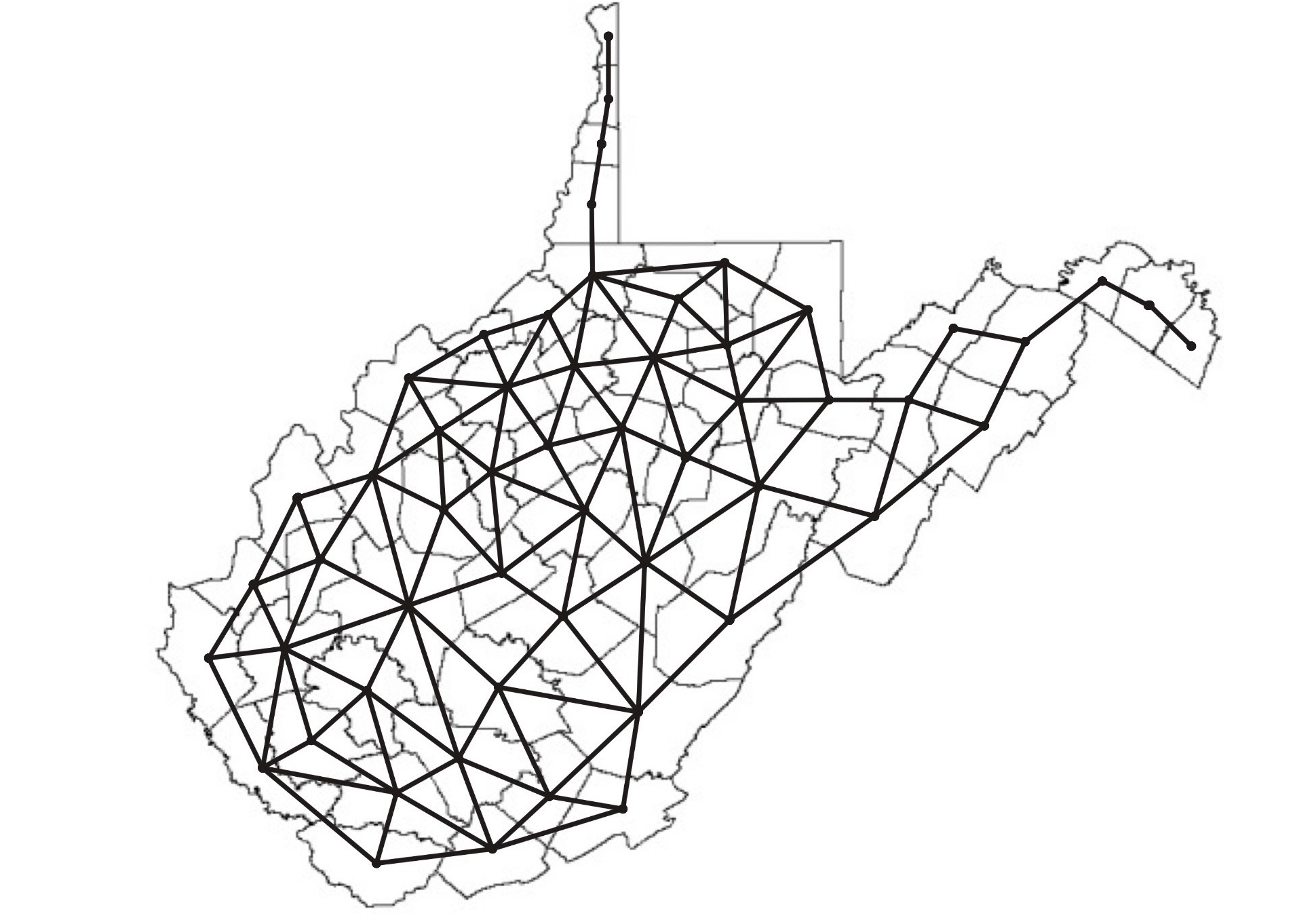}
    \caption{Dual graph for West Virginia where the geographical units are counties.} 
    \label{fig:wvdual}
\end{figure}

As illustrated in Figure \ref{fig:wvdual}, dual graphs arising from real geographic regions often contain square and triangular faces. This observation has motivated several recent studies of spanning trees and balanced partitions in families of graphs where the faces are all squares or triangles. Cannon, Pegden, and Tucker-Foltz \cite{cannon2024balancedforests} studied balanced partitions of grid graphs and developed polynomial-time algorithms for sampling balanced forests, establishing bounds on the probability that a random spanning tree admits a balanced partition. Gallagher and Tapp \cite{gal25} later derived an exact expression for the proportion of balanced spanning trees in the $2 \times n$ grid graph. Wang \cite{wang2023triangular} investigated spanning tree enumeration on three families of graphs with triangular faces—rows of triangles, fan graphs, and chains of triangles. More recently, Guglani \cite{guglani2025triangular} established a lower bound for the proportion of balanced spanning trees in hexagonal regions of triangular lattices.

In this paper, we study balanced spanning trees in rows of triangles. 
In particular, we consider \emph{$2\times n$ triangular strip lattices}, obtained by adding a diagonal to each square face of a $2\times n$ grid graph, and establish sharp asymptotic bounds for the proportion of balanced spanning trees as $n\to \infty.$ We further determine the limiting probability that a uniformly random spanning tree of a $2 \times n$ triangular strip lattice is balanced.

\begin{figure}[H]
    \centering
    \begin{tikzpicture}[thick]
        \draw grid(6,1);
        \draw (0,1)--(1,0)--(2,1) (2,0)--(3,1)--(4,0) (4,1)--(5,0)--(6,1);
        \useasboundingbox (0,-1) rectangle (6,2);
    \end{tikzpicture}
    \caption{An example of a $2$-by-$7$ triangular strip lattice in $\mathscr{G}_{14}$.}
   \label{fig:trianglestriplattice}
\end{figure}

Before presenting our main theorems, we formally define a balanced spanning tree.

\begin{defn}[Balanced spanning tree]\label{defn: Balanced_spanning_tree}
Let $\mathcal{T}$ be a spanning tree of a graph $G = (V, E)$ with an even number of vertices, $|V| = 2n$. We say that $\mathcal{T}$ is a \emph{balanced spanning tree} if there exists an edge $e \in E(\mathcal{T})$ such that the forest $\mathcal{T} \setminus \{e\}$ consists of exactly two disjoint, connected subtrees, each spanning exactly $n$ vertices in $G$. 
\end{defn}

Our main results are summarized in the following two theorems.
\begin{theorem}[Asymptotic Bounds for Balanced Spanning Trees]\label{thm:maintheorem} Let $\mathscr{G}_{2n}$ be the family of triangular strip lattices with $2n$ vertices, where $n \in \mathbb{N}$. Then for any $\Gamma_{2n} \in \mathscr{G}_{2n}$, the asymptotic proportion of balanced spanning trees satisfies the following bounds:
\begin{align*}
\text{Even Case ($n = 2m$): } &\quad 0.63087 \leq \lim_{n \to \infty} \dfrac{S_{2n}}{T_{2n}} \leq 0.668328, \\[0.5em]
\text{Odd Case ($n = 2m + 1$): } &\quad 0.544933 \leq \lim_{n \to \infty} \dfrac{S_{2n}}{T_{2n}} \leq 0.668328.
\end{align*}
where $T_{2n}$ and $S_{2n}$ denote the number of spanning trees and balanced spanning trees respectively in $\Gamma_{2n}$.
\end{theorem}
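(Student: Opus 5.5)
Throughout, I read $\Gamma_{2n}$ as a $2\times n$ strip with one diagonal in each of its $n-1$ squares, oriented arbitrarily; this is what makes $\mathscr{G}_{2n}$ a family rather than a single graph. A balanced cut of a spanning tree must leave $n$ vertices on each side, and in a $2\times n$ strip there are only two kinds of such cuts.

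\emph{Vertical cuts.} The tree meets the middle square (or the middle rung) in exactly one edge, and each half is itself a spanning tree of a $2\times(n/2)$ block. This is possible only when $n$ is even.

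\emph{Staggered cuts.} The two halves are $\{$top $1..k$, bottom $1..n-k\}$ and its complement, with $k\neq n/2$. The halves are then joined through a single horizontal edge in one row, plus the boundary structure near the stagger. For the second component to be connected, $k$ and $n-k$ must lie within a bounded window of columns near the middle. I will argue this from the path structure: a long stretch of a single row is a path, and it can only be attached to the rest of its half through vertical or diagonal edges, which the cut would sever.

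Following Gallagher and Tapp \cite{gal25}, I would write
\[
S_{2n}=\sum_{\text{cut types } c} N_c ,
\]
where each $N_c$ is a product of spanning-tree counts of the two halves, possibly with one half carrying a prescribed boundary condition. These factors come from the paper's recurrence for $T_{2n}$ (the Raff-style transfer matrix), so each is a linear combination of $\lambda^{k}$, with $\lambda$ the dominant eigenvalue of the transfer matrix. Because the spanning trees of the two halves are counted independently, there is no overcounting across distinct cut edges except through explicit inclusion–exclusion for trees with two balanced edges. Such trees occur only in a bounded number of local configurations, and their contribution is again a product of this form.

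The key step is the ratio $N_c/T_{2n}$. The spanning-tree count factorizes through the transfer matrix at the cut: $T_{2n}\approx u^{T}M_1\cdots M_{n-1}v$, while $N_c$ replaces one local matrix by a rank-one or "cut" matrix. Hence
\[
\frac{N_c}{T_{2n}}\to \frac{\langle \ell, C_c\, r\rangle}{\lambda^{j}\langle \ell, r\rangle},
\]
where $\ell$ and $r$ are the left and right Perron vectors and $j$ is the number of columns the cut occupies. The transfer matrices depend on the local diagonal orientation, so this limit depends only on the orientations in the bounded window around the middle. The limit of $S_{2n}/T_{2n}$ is then a finite expression in the orientations of the finitely many central squares.

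I expect the main obstacle to be this step, and in particular the dependence of the limit on the diagonal pattern. I would handle it by enumerating all $2^{w}$ orientation patterns of the central window, where $w$ is small, e.g.\ $3$ or $4$. Using the symmetries (reflection top/bottom and left/right), I would reduce the patterns to a handful of classes. For each class I would compute the limit exactly via the Perron eigenvector of the $2\times 2$ (or $3\times 3$) transfer matrices, which involve quadratic irrationals, and then take the minimum and maximum over the classes.

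The parity split in Theorem~\ref{thm:maintheorem} arises because vertical cuts exist only when $n$ is even. For odd $n$ the limit is instead determined by the diagonal orientations of the two middle squares, which lowers the minimum ($0.544933$ against $0.63087$) while the maximum $0.668328$ is shared by both parities. Sharpness follows because each extreme is attained by an explicit periodic or constant orientation pattern, for instance all diagonals parallel versus zigzag as in Figure~\ref{fig:trianglestriplattice}.
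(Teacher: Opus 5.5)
\textbf{Verdict: genuine gap.} The proposal breaks at your claim that a staggered cut has connected halves only when $k$ and $n-k$ lie in a bounded central window.

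\textbf{Why the bounded-window claim fails.} For every $k\in\{1,\dots,n\}$, both $\{\text{top }1,\dots,k;\ \text{bottom }1,\dots,n-k\}$ and its complement are connected. The long single-row stretch is attached to the rest of its half by a horizontal edge in that same row, which the cut does not sever. So all $n$ staircase partitions occur.

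For each partition, the number of balanced trees is exactly (number of crossing edges)$\,\cdot\,T(A)\,T(B)$. The number of crossing edges grows linearly in the offset $i$ from the centre. For $n=2m$ it is $4i+1$, $4i+2$ or $4i+3$, according to the diagonals of squares $m-i$ and $m+i$, where the stagger begins and ends.

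After dividing by $T_{2n}$, the offset-$i$ contribution tends to a constant of order $i\phi^{-8i}$. This is small but nonzero for every $i$. It also depends on whether those two squares carry the same diagonal (the paper's $\sigma_i$): for even $n$ the two values of $\sigma_i$ give limiting contributions differing by $\alpha\phi^{-8i}(4i+2-\sqrt5)>0$.

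Hence the limit is not a finite expression in a bounded window of orientations. Enumerating $2^w$ patterns with $w=3$ or $4$ cannot produce the constants in the theorem. For example, $0.668328=(10+3\sqrt5)/25$ is the entire series $\alpha\phi^2\sum_{i\ge1}(2i+1)\phi^{-4i}$, and every truncation of it is strictly smaller.

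\textbf{What you need instead.}
(i) The exact finite-$n$ formula summing $\mathrm{len}(\gamma)\,T_LT_R$ over all $n$ partitions, with end blocks of $2(m-i)+1$ or $2(m-i)$ vertices. Note that $T_{2n}$ itself does not depend on the orientations, so all orientation dependence sits here.
(ii) A comparison, for each pair $i$ separately and each finite $n$, showing the pair contributes more when $\sigma_i=1$ than when $\sigma_i=0$. Then the all-agree and all-disagree configurations bound $S_{2n}$ above and below for every $n$. This is what makes the statement hold for an arbitrary $\Gamma_{2n}$, whose configuration may change with $n$.
(iii) A uniform geometric bound on the normalized terms, so that dominated convergence lets you sum the termwise limits.

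\textbf{Further errors.}
No spanning tree has two balanced edges: deleting any other edge leaves a component strictly inside one half. So no inclusion--exclusion is needed.

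Your sharpness example fails for even $n$. Squares $m-i$ and $m+i$ are an even distance apart, so the zigzag has $\sigma_i=1$ for all $i$ and attains the \emph{upper} bound. The even lower bound is attained by a pattern antisymmetric about the centre, e.g.\ $\backslash$ on squares $1,\dots,m-1$ and $/$ on squares $m+1,\dots,2m-1$.

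The odd minimum is not lower because of the two middle squares alone. It is lower because, for even $n$, the dominant vertical-cut term $3T_{2m}^2$ is orientation-independent. For odd $n$, every term, including the dominant $i=0$ one, depends on some $\sigma_i$.
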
 

While \Cref{thm:maintheorem} provides sharp asymptotic upper and lower bounds, we are also interested in the behavior of a typical triangular strip lattice. To model such a lattice, we assign each diagonal edge one of the two possible orientations independently and with equal probability (see \Cref{sec:proportion} for a more precise description), where the orientation specifies whether the diagonal joins the top-left and bottom-right vertices or the top-right and bottom-left vertices of its corresponding square face. We then calculate the asymptotic proportion of balanced spanning trees for this random triangular strip lattice.

\begin{theorem}[Average Proportion for Typical Triangular Strip Lattices]\label{thm:expectedvalue}
Let $\Gamma_{2n}$ be a triangular strip lattice whose diagonal configuration is chosen uniformly at random. As $n \to \infty$, the expected proportion of balanced spanning trees converges to the mean of the asymptotic bounds:
\begin{align*}
\text{Even Case $(n = 2m)$: } \quad &\lim_{n \to \infty} \mathbb{E}\left[\frac{S_{2n}}{T_{2n}}\right] \approx 0.6496, \\[0.5em]
\text{Odd Case $(n = 2m + 1)$: } \quad &\lim_{n \to \infty} \mathbb{E}\left[\frac{S_{2n}}{T_{2n}}\right] \approx 0.6066.
\end{align*}
\end{theorem}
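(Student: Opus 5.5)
The plan is to reduce the expected proportion to a sum of local contributions near the middle of the strip, and then average those contributions over independent uniform diagonal orientations. First I will fix notation. A $2\times n$ strip with $2n$ vertices has $n$ columns and $n-1$ square faces, each carrying a diagonal of one of two orientations. A balanced cut must split the vertex set into two connected halves of $n$ vertices each. Following the Gallagher--Tapp argument adapted earlier, such a cut is determined by a small window around the middle column(s): for $n=2m$ it is near the boundary between columns $m$ and $m+1$, and for $n=2m+1$ the middle column $m+1$ must itself be split between the two halves.

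For a fixed orientation vector, $S_{2n}$ decomposes as a finite sum over cut types. Each term is a product of a left count and a right count (spanning trees or spanning forests of the left and right sub-strips with prescribed boundary behaviour), together with a local factor depending only on the orientations of the one or two diagonals in the middle window. Meanwhile $T_{2n}$ is computed by the transfer-matrix recurrences for spanning trees established earlier. Each diagonal contributes a $2\times 2$ (or $3\times 3$) transfer matrix $M_L$ or $M_R$, and in fact these matrices are conjugate by the reflection symmetry, so they share a spectrum.

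The next step is to write the ratio for large $n$. The left and right products of transfer matrices enter numerator and denominator through the same boundary vectors, so only the projective direction of the partial products at the middle matters. By Perron--Frobenius, and because all products of $M_L$, $M_R$ are positive, these directions converge exponentially fast and forget distant diagonals. I expect that the realized limits in Theorem~\ref{thm:maintheorem}, and the extremes $0.63087$, $0.544933$, $0.668328$, depend only on the orientations of the one or two middle diagonals together with the limiting boundary ratios. Concretely, $\lim S_{2n}/T_{2n}$ should be one of finitely many values $p_\sigma$, indexed by the local middle configuration $\sigma$.

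Averaging then gives the result. Since the diagonals are i.i.d.\ uniform, $\mathbb{E}[S_{2n}/T_{2n}]$ tends to $\sum_\sigma 2^{-|\sigma|}p_\sigma$ by dominated convergence, which applies because the ratio is at most $1$. For the even case this should be the mean of the two extreme values, $(0.63087+0.668328)/2\approx 0.6496$. For the odd case it should be $(0.544933+0.668328)/2\approx 0.6066$, once I check that the other local configurations either give the same value or pair off by the left-right reflection symmetry.

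The main obstacle is justifying that the ratio depends only on the local middle configuration. If the Perron direction of a random product actually depends on the whole random sequence, the limit is a random variable and not a finite set of values. In that case I would first try to show that $M_L$ and $M_R$ have a common Perron eigenvector after the appropriate normalization, which the reflection symmetry suggests. Failing that, I would compute the expectation using the stationary distribution of the projective random walk (a Furstenberg-type measure) and evaluate it numerically, which would match the ``$\approx$'' in the statement.
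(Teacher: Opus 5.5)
\textbf{There is a genuine gap: the locality claim your argument rests on is false.} Balanced cuts are not confined to a window around the middle. The $n$ balanced partitions (leftmost $k$ top vertices together with leftmost $n-k$ bottom vertices) include the split into top and bottom rows, whose dual loop has length $2n-1$ and crosses every square. In the paper's indexing, the loop for index $i$ runs from square $m-i$ to the mirror square $n-m+i$. Both its length and the sizes of its end blocks are fixed by the diagonals of exactly those two squares, i.e.\ by $\sigma_i$.

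Hence, by Theorem~\ref{thm:Snforheterogeneous}, $S_{2n}/T_{2n}$ is asymptotically $c+\sum_i\bigl[\sigma_i a_i+(1-\sigma_i)b_i\bigr]$ with $a_i>b_i>0$ and $a_i,b_i$ of order $i\,\phi^{-8i}$. Distant mirror pairs are damped but never forgotten. So $\lim S_{2n}/T_{2n}$ is not one of finitely many values $p_\sigma$: different patterns of $(\sigma_i)$ give infinitely many distinct limits. The extremes are also global, not local: $0.668328$ requires $\sigma_i=1$ for every $i$, while $0.63087$ and $0.544933$ require $\sigma_i=0$ for every $i$. Your midpoint conclusion is therefore asserted rather than derived. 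The deferred check that configurations ``pair off by reflection'' is precisely the missing argument. The Furstenberg-measure fallback would at best give a numerical estimate, not the identity with the mean of the bounds.

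\textbf{The missing idea.} The transfer matrix $A=\bigl(\begin{smallmatrix}5&3\\3&2\end{smallmatrix}\bigr)$ acting on $(T_{2n},F_{2n})$ is \emph{identical} for the two diagonal orientations, not merely conjugate. So $T_{2n}$ and the end-block counts $T_{2(m-i)}$, $T_{2(m-i)+1}$ do not depend on the configuration at all, and $\mathbb{E}[S_{2n}/T_{2n}]=\mathbb{E}[S_{2n}]/T_{2n}$. This makes the Perron--Frobenius machinery unnecessary.

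Theorem~\ref{thm:Snforheterogeneous} writes $S_{2n}=C_n+\sum_i\bigl[\sigma_iX_i+(1-\sigma_i)Y_i\bigr]$ with deterministic $C_n,X_i,Y_i$. For uniform diagonals, $\mathbb{E}[\sigma_i]=\tfrac{1}{4}+\tfrac{1}{4}=\tfrac{1}{2}$. Linearity of expectation alone then gives, for every $n$,
\[
\mathbb{E}\Bigl[\frac{S_{2n}}{T_{2n}}\Bigr]=\frac{1}{2}\Bigl(\frac{S^{\max}_{2n}}{T_{2n}}+\frac{S^{\min}_{2n}}{T_{2n}}\Bigr),
\]
where $S^{\max}_{2n}$ and $S^{\min}_{2n}$ are the values with all $\sigma_i=1$ and all $\sigma_i=0$. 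The limits of the two terms are the computations in Propositions~\ref{prop:upperbound}, \ref{prop:lowerboundeven} and~\ref{prop:lowerboundodd}. They give $(0.668328+0.63087)/2\approx 0.6496$ and $(0.668328+0.544933)/2\approx 0.6066$. That is the paper's proof; no control of random matrix products is needed.
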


\section*{Acknowledgments}

We gratefully acknowledge the support provided by the Summer Undergraduate Research Fellowship (SURF) at Rice University, which provided funding and made this project possible.

\section{Counting Spanning Trees}

In this section, we count the number of spanning trees in a given triangular strip lattice. Our approach is inspired by Raff's method for enumerating spanning trees in grid graphs \cite{raff}.

\subsection{Counting spanning trees of a triangular strip lattice with an even number of vertices}\label{thm:spanningtrees}
We begin by considering triangular strip lattices with an even number of vertices, as balanced spanning trees are defined only for graphs with an even number of vertices. Let $\Gamma_{2n}$ denote an arbitrary triangular strip lattice with $2n$ vertices, where $n \in \mathbb{N}.$ Let $T_{2n}$ denote the number of spanning trees of $\Gamma_{2n}.$

\begin{theorem}[Number of spanning trees in $\Gamma_{2n}$]\label{thm:spanningtrees}
For any triangular strip lattice $\Gamma_{2n},$ the number of spanning trees satisfies the recurrence
\[T_{2(n+2)} = 7T_{2(n+1)} - T_{2n},\]
with initial conditions $T_2=1$ and $T_4=8.$
\end{theorem}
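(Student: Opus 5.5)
The plan is a transfer-matrix argument in the spirit of Raff~\cite{raff}. We build $\Gamma_{2n}$ one column at a time and track two quantities. Label the strip so that $\Gamma_{2n}$ has $n$ columns, and let $x,y$ be the top and bottom vertices of the last column. Let $A_n=T_{2n}$ be the number of spanning trees of $\Gamma_{2n}$. Let $B_n$ be the number of spanning forests of $\Gamma_{2n}$ with exactly two components, one containing $x$ and the other containing $y$. Both counts depend on $\Gamma_{2n}$, but we will show that the \emph{transition coefficients} do not depend on the diagonal orientations. From this the recurrence follows.

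\textbf{Step 1 (one-column extension).} Add a new column with vertices $u$ (top) and $v$ (bottom). The new edges are $xu$, $yv$, $uv$, and one diagonal, which is either $xv$ or $yu$. Assume the diagonal is $xv$; the other case is the mirror image under swapping the two rows. That swap fixes both the tree condition and the ``$x$ and $y$ separated'' condition, so it gives the same coefficients. Every spanning tree or two-component forest of the extended graph restricts to the old graph as either a spanning tree or an $x$/$y$-separating two-component forest. So it suffices to count, for each old state, the admissible subsets of the four new edges.
\begin{itemize}
\item \emph{Tree to tree.} Contract the old tree to a single vertex $w$. We get a triangle on $w,u,v$ with a doubled side $wv$, coming from $yv$ and $xv$. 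This has $2+1+2=5$ spanning trees.
\item \emph{Forest to tree.} We must choose $3$ of $xu,uv,yv,xv$ without the triangle $xu,uv,xv$. This gives $3$ ways.
\item \emph{Tree to forest.} Either $u$ is isolated and $v$ attaches by $yv$ or $xv$ (2 ways), or $v$ is isolated and $u$ attaches by $xu$ (1 way). This gives $3$ ways.
\item \emph{Forest to forest.} Either $u$ joins $x$'s component and $v$ joins $y$'s component via $xu,yv$, or $u$ is isolated and $v$ merges the two old components via $xv,yv$. This gives $2$ ways.
\end{itemize}
The second forest-to-forest case is easy to overlook. It matters, and a sanity check confirms it: a direct count gives $B_2=5$ for the $2\times2$ case, which is $K_4$ minus an edge.

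\textbf{Step 2 (matrix and recurrence).} Together these give
\[
\begin{pmatrix}A_{n+1}\\ B_{n+1}\end{pmatrix}=M\begin{pmatrix}A_n\\ B_n\end{pmatrix},\qquad M=\begin{pmatrix}5&3\\3&2\end{pmatrix},
\]
with $A_1=1$ and $B_1=1$ (two isolated vertices). Since $\operatorname{tr}M=7$ and $\det M=1$, Cayley--Hamilton gives $M^2=7M-I$. Applying this to $(A_n,B_n)^{T}$ and reading off the first coordinate yields $T_{2(n+2)}=7T_{2(n+1)}-T_{2n}$. The initial values are $T_2=1$ and $T_4=5\cdot1+3\cdot1=8$, as claimed.

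\textbf{Main obstacle.} The delicate step is Step 1. Two things must be right there: the forest-to-forest count, where the merging configuration is easy to miss, and the justification that the coefficients are independent of orientation. For the latter I will use the row-swap symmetry applied locally to each new column. This suffices because the states $A$ and $B$ are symmetric in $x,y$, so the earlier orientations never enter the coefficients. I would verify the result numerically against $T_6=55$, and check that this value holds for both the zigzag and the parallel-diagonal $2\times3$ strips.
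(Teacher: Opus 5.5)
Your proposal is correct and follows the paper's proof essentially step for step. It uses the same two states (spanning trees and two-component forests separating the last column's vertices), the same local counts $5,3,3,2$ giving the matrix $\begin{pmatrix}5&3\\3&2\end{pmatrix}$, Cayley--Hamilton to get $M^2=7M-I$, and the same initial data $T_2=F_2=1$. Your contraction argument for the local counts and your row-swap justification of orientation-independence are somewhat more explicit than the paper's figure-based enumeration, but the route is the same.
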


\begin{proof}
To derive a recurrence for the number of spanning trees, we will examine all ways a spanning tree in $\Gamma_{2(n+1)}$ can be constructed by adding two vertices to the right end of $\Gamma_{2n}$.\\
\indent Let $u$ and $v$ denote the two rightmost vertices of $\Gamma_{2(n+1)}$, and let $\mathcal{T}$ be a spanning tree of $\Gamma_{2(n+1)}.$ Since $\mathcal{T}$ is a spanning tree of $\Gamma_{2(n+1)},$ the graph $\mathcal{T} \setminus \{u,v\}$ contains no loops and spans all vertices of $\Gamma_{2n}.$ Consequently, $\mathcal{T}\setminus\{u,v\}$ is either
\begin{enumerate}
    \item a spanning tree of $\Gamma_{2n}$ or
    \item a spanning forest of $\Gamma_{2n}$ consisting of two components, with the two rightmost vertices of $\Gamma_{2n}$ lying in different trees.
\end{enumerate}

Note that it suffices to consider forests from (2) because if there are more than two trees in the forest or if the rightmost vertices of the forest lie on the same tree, the added vertices cannot adjoin the trees into a single spanning tree.

\indent Accordingly, let $\mathscr{T}_{2n}$ denote the set of spanning trees in $\Gamma_{2n},$ and let $\mathscr{F}_{2n}$ denote the set of spanning forests in $\Gamma_{2n}$ described in (2). Starting from a spanning tree in $\mathscr{T}_{2n}$, we can extend to a spanning tree in $\mathscr{T}_{2(n+1)}$ by adding two new vertices $u$ and $v$ to its right end. For this to happen, we must pick edges that connect both $u$ and $v$ to the rest of the tree without creating any loops. Note that if one were to pick a diagonal, that diagonal must either join the top-left and bottom-right vertices or the top-right and bottom-left vertices as predetermined by the structure of $\Gamma_{2n}.$ For either type of diagonal, there are five possible ways to extend a spanning tree in $\mathscr{T}_{2n}$ to a spanning tree in $\mathscr{T}_{2(n+1)}.$

\begin{figure}[H]
    \centering
        \begin{tikzpicture}[thick, every node/.style={font=\sffamily\large}, xscale=0.8,yscale=0.8]
    \fill[lightgray] (0, 2) rectangle (3, 3);
    \node at (1.5, 2.5) {\textcolor{white}{Tree}};
    
    \fill (3, 3) circle (1.5pt);
    \fill (3, 2) circle (1.5pt);
    \fill (4, 3) circle (1.5pt);
    \draw (3, 3) -- (4, 3);
    \draw (3, 2) -- (4, 2);
    \fill (4, 2) circle (1.5pt);
    
    \fill[lightgray] (5, 2) rectangle (8, 3);
    \node at (6.5, 2.5) {\textcolor{white}{Tree}};
    
    \fill (8, 3) circle (1.5pt);
    \fill (8, 2) circle (1.5pt);
    \fill (9, 3) circle (1.5pt);
    \fill (9, 2) circle (1.5pt);
    \draw (8, 3) -- (9, 3);
    \draw (9, 2) -- (9, 3);

    \fill[lightgray] (10, 2) rectangle (13, 3);
    \node at (11.5, 2.5) {\textcolor{white}{Tree}};
    
    \fill (13, 3) circle (1.5pt);
    \fill (13, 2) circle (1.5pt);
    \fill (14, 3) circle (1.5pt);
    \fill (14, 2) circle (1.5pt);

    \draw (13, 2) -- (14, 2);
    \draw (14, 2) -- (14, 3);

    \fill[lightgray] (2.5, 0) rectangle (5.5, 1);
    \node at (4, 0.5) {\textcolor{white}{Tree}};
    \fill (5.5, 1) circle (1.5pt);
    \fill (5.5, 0) circle (1.5pt);
    
    \draw (5.5, 1) -- (6.5, 1);
    \draw (5.5, 1) -- (6.5, 0);
    \fill (6.5, 0) circle (1.5pt);
    \fill (6.5, 1) circle (1.5pt);

    \fill[lightgray] (7.5, 0) rectangle (10.5, 1);
    \node at (9, 0.5) {\textcolor{white}{Tree}};
    \fill (10.5, 1) circle (1.5pt);
    \fill (10.5, 0) circle (1.5pt);
    
    \draw (10.5, 1) -- (11.5, 0);
    \draw (11.5, 0) -- (11.5, 1);
    \fill (11.5, 0) circle (1.5pt);
    \fill (11.5, 1) circle (1.5pt);

\end{tikzpicture}
    \caption{Possible ways to extend a spanning tree in $\mathscr{T}_{2n}$ to a spanning tree in $\mathscr{T}_{2(n+1)}.$}
    \label{fig:extendtrees}
\end{figure}

The only structure that can extend to a tree in $\mathscr{T}_{2(n+1)}$ in addition to spanning trees is a forest in $\mathscr{F}_{2n}$. There are three ways each forest in $\mathscr{F}_{2n}$ can be extended to a tree in $\mathscr{T}_{2(n+1)}$ (See Figure \ref{fig:foreststotree}). 

\begin{figure}[H]
    \centering
    \begin{tikzpicture}[thick, every node/.style={font=\sffamily\large},xscale=0.8,yscale=0.8]
    \fill[lightgray] (0, 2) rectangle (3, 3);
    \node at (1.5, 2.5) {\textcolor{white}{Forest}};
    
    \fill (3, 3) circle (1.5pt);
    \fill (3, 2) circle (1.5pt);
    \fill (4, 3) circle (1.5pt);
    \draw (3, 3) -- (4, 3);
    \draw (3, 2) -- (4, 2);
    \draw (4, 2) -- (4, 3);
    \fill (4, 2) circle (1.5pt);
    
    \fill[lightgray] (5, 2) rectangle (8, 3);
    \node at (6.5, 2.5) {\textcolor{white}{Forest}};
    
    \fill (8, 3) circle (1.5pt);
    \fill (8, 2) circle (1.5pt);
    \fill (9, 3) circle (1.5pt);
    
    \draw (8, 3) -- (9, 2);
    \draw (8, 2) -- (9, 2);
    \draw (8, 3) -- (9, 3);
    \fill (9, 2) circle (1.5pt);

    \fill[lightgray] (2.5, 0) rectangle (5.5, 1);
    \node at (4, 0.5) {\textcolor{white}{Forest}};
    \fill (5.5, 1) circle (1.5pt);
    \fill (6.5, 0) circle (1.5pt);
    
    \draw (5.5, 0) -- (6.5, 0);
    \draw (5.5, 1) -- (6.5, 0);
    \draw (6.5, 0) -- (6.5, 1);
    \fill (5.5, 0) circle (1.5pt);
    \fill (6.5, 1) circle (1.5pt);

\end{tikzpicture}
    \caption{Possible ways to extend a forest in $\mathscr{F}_{2n}$ to a spanning tree in $\mathscr{T}_{2(n+1)}$}
    \label{fig:foreststotree}
\end{figure}

Let $T_k = |\mathscr{T}_k|$ and $F_k = |\mathscr{F}_k|$. From the previous paragraphs, we obtain the recurrence
\[T_{2(n+1)} = 5T_{2n} + 3F_{2n}.\]
Since $F_{2n}$ is featured in the above relation, it is equally important to keep track of $F_{2n}$. We can derive a recurrence for $F_{2n}$ by similar reasoning. Starting from a spanning tree in $\mathscr{T}_{2n}$, there are three ways to append two vertices to get a forest in $\mathscr{F}_{2(n+1)}$: 

\begin{figure}[H]
    \centering
        \begin{tikzpicture}[thick, every node/.style={font=\sffamily\large},xscale=0.8,yscale=0.8]
    \fill[lightgray] (0, 2) rectangle (3, 3);
    \node at (1.5, 2.5) {\textcolor{white}{Tree}};
    
    \fill (3, 3) circle (1.5pt);
    \fill (3, 2) circle (1.5pt);
    \fill (4, 3) circle (1.5pt);
    \draw (4, 3) -- (3, 3);
    \fill (4, 2) circle (1.5pt);
    
    \fill[lightgray] (5, 2) rectangle (8, 3);
    \node at (6.5, 2.5) {\textcolor{white}{Tree}};
    
    \fill (8, 3) circle (1.5pt);
    \fill (8, 2) circle (1.5pt);
    \fill (9, 3) circle (1.5pt);
    
    \draw (8, 3) -- (9, 2);
    \fill (9, 2) circle (1.5pt);

    \fill[lightgray] (2.5, 0) rectangle (5.5, 1);
    \node at (4, 0.5) {\textcolor{white}{Tree}};
    \fill (5.5, 1) circle (1.5pt);
    \fill (5.5, 0) circle (1.5pt);
    
    \draw (5.5, 0) -- (6.5, 0);
    \fill (6.5, 0) circle (1.5pt);
    \fill (6.5, 1) circle (1.5pt);

\end{tikzpicture}
    \caption{Possible ways to extend a spanning tree in $\mathscr{T}_{2n}$ to a forest in $\mathscr{F}_{2(n+1)}$}
    \label{fig:placeholder}
\end{figure}

And there are two ways to extend a forest in $\mathscr{F}_{2n}$ to a forest in $\mathscr{F}_{2(n+1)}$:
\begin{figure}[H]
    \centering
        \begin{tikzpicture}[thick, every node/.style={font=\sffamily\large},xscale=0.8,yscale=0.8]

    \fill[lightgray] (0, 0) rectangle (3, 1);
    \node at (1.5, 0.5) {\textcolor{white}{Forest}};
    \fill (3, 1) circle (1.5pt);
    \fill (3, 0) circle (1.5pt);
    
    \draw (3, 0) -- (4, 0);
    \fill (4, 0) circle (1.5pt);
    \fill (4, 1) circle (1.5pt);
    \draw (3, 1) -- (4, 1);
    \fill[lightgray] (5, 0) rectangle (8, 1);
    \node at (6.5, 0.5) {\textcolor{white}{Forest}};
    
    \fill (8, 1) circle (1.5pt);
    \fill (8, 0) circle (1.5pt);
    \fill (9, 1) circle (1.5pt);
    
    \draw (8, 1) -- (9, 0);
    \fill (9, 0) circle (1.5pt);
    \draw (8, 0) -- (9, 0);

\end{tikzpicture}
    \caption{Possible ways to extend a forest in $\mathscr{F}_{2n}$ to a forest in $\mathscr{F}_{2(n+1)}$}
    \label{fig:placeholder}
\end{figure}

Thus, we get the recurrence 
\[F_{2(n+1)} = 3T_{2n} + 2F_{2n}. \] 

Our goal now is to combine the preceding recurrences to obtain a recurrence involving only $T_{2n}$. First, we let $v_{2n}$ denote the column vector
\[v_{2n} = 
\begin{bmatrix}
    T_{2n} \\
    F_{2n}
\end{bmatrix}.
\]
And if we define matrix A as
\[A = 
\begin{bmatrix}
    5 & 3 \\
    3 & 2
\end{bmatrix},
\]
we satisfy \[v_{2(n+1)}=Av_{2n}.\]
Since the characteristic polynomial for A is \[X_A(\lambda) = \lambda^2 - 7\lambda + 1,\] by applying the Cayley-Hamilton Theorem, we obtain
\[A^2 -7A +I = 0,\] which can be rewritten as
\[A^2 = 7A -I.\]
Multiplying $v_{2n}$ on both sides, we obtain
\[v_{2(n+2)} = 7v_{2(n+1)} - v_{2n}.\]
Hence:
\begin{equation}\label{eqn:recurrencerelationspanningtree} 
    \begin{aligned}
    T_{2(n+2)} = 7T_{2(n+1)} - T_{2n},\\
    \text{ and } F_{2(n+2)} = 7F_{2(n+1)} - F_{2n}.
    \end{aligned}
\end{equation}
From enumerating all possible spanning trees in $\mathscr{T}_2$ and $\mathscr{T}_4$ and all possible spanning forests in $\mathscr{F}_2$ and $\mathscr{F}_4$, we find the starting conditions: 

\begin{equation}\label{eqn:initialvalues1}
    \begin{aligned}
    T_2 = 1, \quad T_4 = 8,\\
    \quad \text{and} \quad
    F_2 = 1, \quad F_4 = 5.
    \end{aligned}
\end{equation}
\end{proof}

\subsection{Counting spanning trees of a triangular strip lattice with an odd number of vertices.}

We also consider triangular strip lattices with an odd number of vertices, since enumerating their spanning trees is a necessary prerequisite for counting balanced spanning trees in Section \ref{sec:countingbst}. Let $\Gamma_{2n-1}$ be obtained from $\Gamma_{2(n-1)}$ by adding a vertex to the right end of the lattice and connecting it to the two rightmost vertices (see Figure \ref{fig:Homtristrip11}). 

\begin{figure}[H]
    \centering
        \begin{tikzpicture}[scale=1.5, thick]
        
        \definecolor{dotcolor}{RGB}{210, 60, 40}

        \draw (0,1) -- (4,1); 
        \draw (0,0) -- (5,0); 

        \foreach \x in {0,1,2,3,4} {
            \draw (\x, 1) -- ++(1, -1); 
        }

        \foreach \x in {0,1,2,3,4} {
            \draw (\x,0) -- (\x,1); 
            \fill[dotcolor] (\x,1) circle (2pt);
            \fill[dotcolor] (\x,0) circle (2pt);
        }
        
        \fill[dotcolor] (5,0) circle (2pt);

    \end{tikzpicture}
    \caption{The graph of $\Gamma_{11},$ obtained by connecting a new vertex to the two rightmost vertices of $\Gamma_{10}.$}
    \label{fig:Homtristrip11}
\end{figure}

Let $T_{2n-1}$ denote the number of spanning trees of $\Gamma_{2n-1}$. The following theorem gives a recurrence for $T_{2n-1}$. 

\begin{theorem}[Number of spanning trees in $\Gamma_{2n - 1}$]\label{thm:sthomogenousodd}
The number of spanning trees in $\Gamma_{2n - 1}$, denoted by $T_{2n - 1}$ satisfies the recurrence relation: 
\[\ T_{2n+3} = 7T_{2n+1} - T_{2n - 1}.\]
with initial conditions $T_3 = 3$ and $T_5 = 21.$
\end{theorem}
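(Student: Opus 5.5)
We reduce $T_{2n+1}$ to the quantities $T_{2n}$ and $F_{2n}$ from the proof of Theorem~\ref{thm:spanningtrees}, and then reuse that theorem's recurrence. The graph $\Gamma_{2n+1}$ is obtained from $\Gamma_{2n}$ by adding one vertex $w$ adjacent to the two rightmost vertices $a,b$ of $\Gamma_{2n}$. Let $\mathcal{T}$ be a spanning tree of $\Gamma_{2n+1}$. Since $w$ has degree $2$ in $\Gamma_{2n+1}$, it has degree $1$ or $2$ in $\mathcal{T}$, and we split into these two cases.

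\emph{Degree $1$.} Then $w$ is a leaf attached to $a$ or to $b$, and $\mathcal{T}\setminus\{w\}$ is a spanning tree of $\Gamma_{2n}$. Conversely, each spanning tree of $\Gamma_{2n}$ extends in exactly these two ways. This case contributes $2T_{2n}$.

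\emph{Degree $2$.} Then $\mathcal{T}$ contains both edges $wa$ and $wb$. Deleting $w$ leaves a spanning forest of $\Gamma_{2n}$ with exactly two components, and $a$ and $b$ lie in different components (otherwise $\mathcal{T}$ would contain a cycle). This is precisely the family $\mathscr{F}_{2n}$. Conversely, adding $w$ with both edges to any such forest yields a spanning tree. This case contributes $F_{2n}$.

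Combining the cases gives
\[T_{2n+1} = 2T_{2n} + F_{2n}.\]
This expression is a fixed linear combination of the entries of $v_{2n}=(T_{2n},F_{2n})^{\top}$. By \eqref{eqn:recurrencerelationspanningtree}, both $T$ and $F$ satisfy $x_{2(n+2)}=7x_{2(n+1)}-x_{2n}$, so any linear combination of them satisfies the same recurrence. Shifting the index by one gives $T_{2n+3}=7T_{2n+1}-T_{2n-1}$.

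For the initial conditions we use \eqref{eqn:initialvalues1}. First, $T_3 = 2T_2+F_2 = 2+1 = 3$, which matches the fact that $\Gamma_3$ is a triangle. Next, $T_5 = 2T_4+F_4 = 16+5 = 21$.

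The main point needing care is the one shared with the even case: the diagonal orientation in each square is arbitrary. The argument above never uses those orientations, because the new vertex is always adjacent to exactly the two rightmost vertices. All orientation-dependence is therefore already absorbed into $T_{2n}$ and $F_{2n}$, whose recurrence holds for every configuration.

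One should still check that $F_2=1$ is the right convention: $\Gamma_2$ is a single edge, and its forest with two isolated vertices is the unique member of $\mathscr{F}_2$. With that convention the initial values are consistent, and the identity $T_{2n+1}=2T_{2n}+F_{2n}$ holds at every step.
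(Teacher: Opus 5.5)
Your proposal is correct and follows the paper's proof. You use the same split: two leaf attachments from a spanning tree of $\Gamma_{2n}$ and one double attachment from a forest in $\mathscr{F}_{2n}$, giving $T_{2n+1}=2T_{2n}+F_{2n}$. You then transfer the recurrence from \eqref{eqn:recurrencerelationspanningtree} and read off $T_3$ and $T_5$ from \eqref{eqn:initialvalues1}. Your explicit degree argument, your remark that the orientations never enter, and your observation that a fixed linear combination of $T_{2n}$ and $F_{2n}$ inherits the recurrence simply make precise what the paper calls ``appropriate substitutions.''
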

\begin{proof} 
    \par 
    Since the orientation of the diagonal edge is determined by the structure of $\Gamma_{2k-1}$, we may assume without loss of generality that the diagonal joins the upper-left and lower-right vertices. The opposite orientation is symmetric and therefore yields the same number of spanning trees.
    \par
    Now, we follow the counting technique as the one presented in the last section with some minor changes. We also use the recurrence relation established in the last section to make our work easier. As before, we denote the rightmost vertices of $\Gamma_{2(n - 1)}$ by $\{u, v\}$. To get a spanning tree in $\Gamma_{2n - 1}$, we can either expand from a spanning tree in $\Gamma_{2(n - 1)}$ or expand from a forest in $\Gamma_{2(n - 1)}$ consisting of two trees each containing a distinct vertex in $\{u, v\}$. Observe from \Cref{fig:R_{2n - 1}} that there are only two ways to get a spanning tree in $\Gamma_{2n - 1}$ from a spanning tree in $\Gamma_{2(n - 1)}$ and only one way  from a forest in $\Gamma_{2(n - 1)}$.

    \begin{figure}[htbp]
        \centering
        \begin{tikzpicture}[thick, every node/.style={font=\sffamily\large},xscale=0.8,yscale=0.8]
    \fill[lightgray] (0, 2) rectangle (3, 3);
    \node at (1.5, 2.5) {\textcolor{white}{Tree}};

    \fill (3, 3) circle (1.5pt);
    \fill (3, 2) circle (1.5pt);
    
    \draw (3, 2) -- (4, 2);
    \fill (4, 2) circle (1.5pt);

    \fill[lightgray] (5, 2) rectangle (8, 3);
    \node at (6.5, 2.5) {\textcolor{white}{Tree}};
    
    \fill (8, 3) circle (1.5pt);
    \fill (8, 2) circle (1.5pt);
    
    \draw (8, 3) -- (9, 2);
    \fill (9, 2) circle (1.5pt);

    \fill[lightgray] (2.5, 0) rectangle (5.5, 1);
    \node at (4, 0.5) {\textcolor{white}{Forest}};
    
    \fill (5.5, 1) circle (1.5pt);
    \fill (5.5, 0) circle (1.5pt);
    
    \draw (5.5, 1) -- (6.5, 0);
    \draw (5.5, 0) -- (6.5, 0);
    \fill (6.5, 0) circle (1.5pt);

\end{tikzpicture}
        \caption{The possible ways to obtain a spanning tree in $\Gamma_{2n - 1}$.}
        \label{fig:R_{2n - 1}}
    \end{figure}
    \par
    So, from this, we obtain the following recurrence relation: 
    \[ T_{2n + 1} = 2T_{2n} + F_{2n}.\]
    
    Using the recurrence relation from (\ref{eqn:recurrencerelationspanningtree}) and making the appropriate substitutions, we get the following: 
    \begin{equation} \label{eqn:recurrencerelationspanningtreeodd} 
     T_{2n+3} = 7T_{2n+1} - T_{2n-1}
 \end{equation}
    Also, from (\ref{eqn:initialvalues1}), we can find the initial values $T_3$ and $T_5$.
    \begin{equation}\label{eqn:initialvalues2}
    T_3 = 2T_2 + F_2 = 2(1) + 1 = 3
    \quad \text{and} \quad
    T_5 = 2T_4 + F_4 = 2(8) + 5 = 21,
\end{equation} which completes our proof.
\end{proof}

\begin{remark}
Although Wang \cite{wang2023triangular} previously derived a recurrence for counting spanning trees in chains of triangles, a broader family of graphs that includes triangular strip lattices, we independently present a self-contained derivation specialized to triangular strip lattices. Although our techniques differ, the recurrences for triangular strip lattices in this section agree with the recurrence established by Wang \cite{wang2023triangular}:
\[
g_t = 3g_{t-1}-g_{t-2},
\]
where $g_t$ denotes the number of spanning trees in a triangular strip lattice with $t$ triangles. The proof of equivalence can be obtained by substituting $g_t=T_{t+2}.$
\end{remark}

\section{Counting balanced spanning trees}\label{sec:countingbst}
\subsection{Counting balanced spanning trees}
With a clear recurrence relation established for the total number of spanning trees, we now calculate the number of balanced spanning trees in a triangular strip lattice. Our derivation follows the general framework introduced by Gallagher and Tapp \cite{gal25}, adapted to the geometry of triangular strip lattices. 



To compute the total number of balanced spanning trees, our derivation proceeds in three main stages, as illustrated in \Cref{fig:overview}.

First, we find the number of certain ``valid'' loops of the \emph{dual graph} $\Gamma^*_{2n}$ (see Definition \ref{defn:dualgraph}) by counting the \emph{balanced partitions} of $\Gamma_{2n}.$ Each such loop separates the vertices of $\Gamma_{2n},$ as shown in red and purple in Figure \ref{fig:overview}. These loops will give rise to candidate balanced edges (bordered in orange). Next, we observe that replacing either component (in red or purple vertices) with any spanning tree on the same vertex set and connecting them with any balanced edge yields a valid balanced tree. Finally, we use this to generate all possible balanced spanning trees. 

For example, for the loop in \Cref{fig:overview} we have a total of $3T_6^2$ balanced spanning trees associated with it as we have $3$ possibilities for the balanced edge and $T_6$ possibilities for the spanning tree in either component.

\begin{figure}[H]
    \centering
    \begin{tikzpicture}[scale=1, line join=round, line cap=round]
    
    \definecolor{gridgray}{RGB}{230, 230, 230}
    \definecolor{primalred}{RGB}{234, 67, 72}
    \definecolor{primalpurple}{RGB}{170, 50, 200}
    \definecolor{dualblue}{RGB}{66, 133, 244}
    \definecolor{dualyellow}{RGB}{250, 190, 75}
    \definecolor{centralgreen}{RGB}{120, 210, 120}
    \definecolor{centralorange}{RGB}{240, 130, 75}

    \draw[gridgray, thick] (0,0) -- (5,0);
    \draw[gridgray, thick] (0,1) -- (5,1);
    \foreach \x in {0,...,5} {
        \draw[gridgray, thick] (\x,0) -- (\x,1);
    }
    
    \draw[gridgray, thick] (0, 1) -- (1, 0);
    \draw[gridgray, thick] (0, 1) -- (1, 0);
    \draw[gridgray, thick] (2, 0) -- (3, 1);
    \draw[black, line width=1.5pt] (0,1) -- (1,1);
    \draw[black, line width=1.5pt] (0,0) -- (1,0);
    \draw[black, line width=1.5pt] (1,0) -- (1,1);
    \draw[black, line width=1.5pt] (1,1) -- (2,0);
    \draw[black, line width=1.5pt] (2,0) -- (2,1);

    \draw[black, line width=1.5pt] (3,1) -- (3,0);
    \draw[black, line width=1.5pt] (3,1) -- (4,0);
    \draw[black, line width=1.5pt] (4,0) -- (5,0);
    \draw[black, line width=1.5pt] (4,0) -- (5,1);
    \draw[black, line width=1.5pt] (4,1) -- (5,1);

    \draw[centralgreen, line width=1.5pt] (2,0) -- (3,0);

    \draw[centralorange, line width=1.5pt] 
        (2.2, 1.5) -- 
        (2.2, 0.7) -- 
        (2.6, 0.3) -- 
        (2.6, -0.5) -- 
        (2.875, -0.5) -- 
        (2.875, 0.3) -- 
        (2.5, 0.7) -- 
        (2.5, 1.5) -- cycle;

    \draw[dualyellow, line width=1.5pt] (-0.3, 0.35) -- (0.35, 0.35) -- (0.7, 0.7);
    
    \draw[dualyellow, line width=1.5pt] (1.3, 0.35) -- (1.3, -0.25);
    \draw[dualyellow, line width=1.5pt] (1.8, 0.65) -- (1.8, 1.35);

    \draw[dualyellow, line width=1.5pt] (2.35, 1.4) -- (2.35, 0.7) -- (2.75, 0.3);
    \draw[centralorange, line width=1.5pt] (2.75, 0.3) -- (2.75, -0.4);

    \draw[dualyellow, line width=1.5pt] (3.3, 0.35) -- (3.3, -0.25);
    \draw[dualyellow, line width=1.5pt] (3.7, 0.65) -- (4.3, 0.65);
    \draw[dualyellow, line width=1.5pt] (3.7, 0.65) -- (3.7, 1.35);
    \draw[dualyellow, line width=1.5pt] (4.7, 0.35) -- (5.3, 0.35);

    \fill[dualblue] (0.35, 0.35) circle (2.5pt);
    \fill[dualblue] (0.7, 0.7) circle (2.5pt);
    
    \fill[dualblue] (1.3, 0.35) circle (2.5pt);
    \fill[dualblue] (1.8, 0.65) circle (2.5pt);
    
    \fill[dualblue] (2.35, 0.7) circle (2.5pt);
    \fill[dualblue] (2.75, 0.3) circle (2.5pt);
    
    \fill[dualblue] (3.3, 0.35) circle (2.5pt);
    \fill[dualblue] (3.7, 0.65) circle (2.5pt);
    
    \fill[dualblue] (4.3, 0.65) circle (2.5pt);
    \fill[dualblue] (4.7, 0.35) circle (2.5pt);

    \foreach \x in {0,1,2} {
        \fill[primalred] (\x,0) circle (2.5pt);
        \fill[primalred] (\x,1) circle (2.5pt);
    }
    
    \foreach \x in {3,4,5} {
        \fill[primalpurple] (\x,0) circle (2.5pt);
        \fill[primalpurple] (\x,1) circle (2.5pt);
    }

\end{tikzpicture}
    \caption{Overview of the three-step framework for counting the total number of balanced spanning trees. }
    \label{fig:overview}
\end{figure}




We now provide a detailed derivation that encompasses all three stages. We begin by defining the dual graph of $\Gamma_{2n}.$

\begin{defn}[Dual graph]\label{defn:dualgraph}
    The dual graph of $\Gamma_{2n},$ denoted by $\Gamma_{2n}^{*},$ is the graph whose vertices correspond to the faces of 
    $\Gamma_{2n},$ including the unbounded exterior face, and whose edges correspond to the edges of $\Gamma_{2n}.$ Specifically, each edge $e \in E(\Gamma_{2n})$ has a corresponding dual edge $e^* \in E(\Gamma_{2n}^{*})$ connecting the two faces of $\Gamma_{2n}$ adjacent to $e$ (See Figure \ref{fig:dual_graph}).
\end{defn}


\begin{figure}[H]
    \centering
    \begin{tikzpicture}[scale=1, line join=round, line cap=round]
    
    \definecolor{gridgray}{RGB}{230, 230, 230}
    \definecolor{primalred}{RGB}{210, 65, 70}
    \definecolor{dualblue}{RGB}{65, 130, 240}
    \definecolor{dualyellow}{RGB}{240, 200, 80}

    \foreach \y in {0,1} {
        \draw[gridgray, line width=1.2pt] (0,\y) -- (5,\y);
    }
    
    \foreach \x in {0,...,5} {
        \draw[gridgray, line width=1.2pt] (\x,0) -- (\x,1);
    }
    
    \draw[gridgray, line width=1.2pt] (0,1) -- (1,0);
    \draw[gridgray, line width=1.2pt] (1,0) -- (2,1);
    \draw[gridgray, line width=1.2pt] (2,1) -- (3,0);
    \draw[gridgray, line width=1.2pt] (3,0) -- (4,1);
    \draw[gridgray, line width=1.2pt] (4,1) -- (5,0);

    \draw[dualyellow, line width=1.5pt] (-0.35, 0.3) -- (0.25, 0.3);
    
    \draw[dualyellow, line width=1.5pt] (4.75, 0.7) -- (5.35, 0.7);

    \draw[dualyellow, line width=1.5pt] 
        (0.25, 0.3) -- (0.75, 0.7) -- 
        (1.25, 0.7) -- (1.75, 0.3) -- 
        (2.25, 0.3) -- (2.75, 0.7) -- 
        (3.25, 0.7) -- (3.75, 0.3) -- 
        (4.25, 0.3) -- (4.75, 0.7);

    \foreach \x in {0.25, 1.75, 2.25, 3.75, 4.25} {
        \draw[dualyellow, line width=1.5pt] (\x, 0.3) -- (\x, -0.3);
    }
    
    \foreach \x in {0.75, 1.25, 2.75, 3.25, 4.75} {
        \draw[dualyellow, line width=1.5pt] (\x, 0.7) -- (\x, 1.3);
    }

    \foreach \x/\y in {
        0.25/0.3, 0.75/0.7,
        1.25/0.7, 1.75/0.3,
        2.25/0.3, 2.75/0.7,
        3.25/0.7, 3.75/0.3,
        4.25/0.3, 4.75/0.7
    } {
        \fill[dualblue] (\x, \y) circle (3pt);
    }

    \foreach \x in {0,...,5} {
        \foreach \y in {0,1} {
            \fill[primalred] (\x, \y) circle (3pt);
        }
    }

\end{tikzpicture}
    \caption{Dual graph of a triangular strip lattice. The vertices of $\Gamma_{2n}$ are shown in red, the edges are grey, while the vertices of $\Gamma_{2n}^*$ are shown in blue and the edges are in yellow. The vertex $v_\infty^*$ is omitted for clarity.}
    \label{fig:dual_graph}
\end{figure}
There is a one-to-one correspondence between the spanning trees of $\Gamma_{2n}$ and the spanning trees of its dual $\Gamma_{2n}^*.$ For any spanning tree $\mathcal{T} \in \mathscr{T}_{2n}$, its corresponding spanning tree $\mathcal{T}^*    \in \mathscr{T}^*_{2n}$ consists precisely of edges $e^*\in \Gamma_{2n}^*$ that cross edges $e \in \Gamma_{2n} \setminus \mathcal{T}.$ See \Cref{fig:spanningtreeexam}.

\begin{figure}[htbp]
     \centering
     \begin{tikzpicture}[scale=1, line join=round, line cap=round]
    
    \definecolor{gridgray}{RGB}{230, 230, 230}
    \definecolor{primalred}{RGB}{234, 67, 72}
    \definecolor{dualblue}{RGB}{66, 133, 244}
    \definecolor{dualyellow}{RGB}{250, 190, 75}

    \foreach \y in {0,1} {
        \draw[gridgray, line width=1.2pt] (0,\y) -- (4,\y);
    }
    
    \foreach \x in {0,...,4} {
        \draw[gridgray, line width=1.2pt] (\x,0) -- (\x,1);
    }
    
    \draw[gridgray, line width=1.2pt] (0,1) -- (1,0);
    \draw[gridgray, line width=1.2pt] (3,1) -- (4,0);

    \draw[dualyellow, line width=1.5pt] (-0.3, 0.35) -- (0.35, 0.35) -- (0.7, 0.7);
    
    \draw[dualyellow, line width=1.5pt] (1.25, 0.4) -- (1.25, -0.25);
    \draw[dualyellow, line width=1.5pt] (1.7, 0.7) -- (1.7, 1.25);
    
    \draw[dualyellow, line width=1.5pt] (2.25, 0.4) -- (2.25, -0.25);
    
    \draw[dualyellow, line width=1.5pt] (2.7, 0.7) -- (3.35, 0.35) -- (3.75, 0.7);
    
    \draw[dualyellow, line width=1.5pt] (3.75, 0.7) -- (3.75, 1.25);

    \draw[black, line width=1.5pt] (0,1) -- (1,1);
    \draw[black, line width=1.5pt] (0,0) -- (1,0);
    \draw[black, line width=1.5pt] (1,0) -- (1,1);
    
    \draw[black, line width=1.5pt] (1,1) -- (2,0);
    \draw[black, line width=1.5pt] (2,0) -- (2,1);
    
    \draw[black, line width=1.5pt] (2,1) -- (3,1);
    \draw[black, line width=1.5pt] (2,1) -- (3,0);
    
    \draw[black, line width=1.5pt] (3,0) -- (4,0);
    \draw[black, line width=1.5pt] (4,0) -- (4,1);

    \fill[dualblue] (0.35, 0.35) circle (2.5pt);
    \fill[dualblue] (0.7, 0.7) circle (2.5pt);
    
    \fill[dualblue] (1.25, 0.4) circle (2.5pt);
    \fill[dualblue] (1.7, 0.7) circle (2.5pt);
    
    \fill[dualblue] (2.25, 0.4) circle (2.5pt);
    \fill[dualblue] (2.7, 0.7) circle (2.5pt);
    
    \fill[dualblue] (3.35, 0.35) circle (2.5pt);
    \fill[dualblue] (3.75, 0.7) circle (2.5pt);

    \foreach \x in {0,...,4} {
        \foreach \y in {0,1} {
            \fill[primalred] (\x, \y) circle (2.5pt);
        }
    }

\end{tikzpicture}
     \caption{A spanning tree of $\Gamma_{10}$ (in red and black) and the corresponding spanning tree in $\Gamma_{10}^*$ (in yellow and blue) for the dual graph ($v_{\infty}^*$ not shown explicitly).}
     \label{fig:spanningtreeexam}
\end{figure}

Under this formulation, removing a single edge $e \in E(\mathcal{T})$ to form a forest $\mathcal{T} \setminus \{e\}$ is equivalent to adding its corresponding edge $e^*$ to $\mathcal{T}^*$, which creates a unique valid loop $\gamma$ in $\mathcal{T}^* \cup \{e^*\}$. (See Figure \ref{fig:looptodiv}.)

    \par
     Let $\mathcal{T} \in \mathscr{T}_{2n}$ be a balanced spanning tree, and let $e \in E(\mathcal{T})$ be its balanced edge whose removal partitions $V(\Gamma_{2n})$ into two components of equal size. 
    The removal of $e$ induces the addition of $e^* \in \Gamma_{2n}^*$ and creates a unique cycle $\gamma$ in $\mathcal{T}^* \cup\{e^*\}.$ This cycle passes through the vertex $v_\infty^{\ast},$ and we call it a \emph{valid loop}.
 \par 
    Now, we claim the following: 

\begin{claim}[Number of valid loops]\label{claim:loops} $\Gamma_{2n}$ has n valid loops.
\end{claim}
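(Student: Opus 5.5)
We reformulate the valid loops as balanced partitions and classify those partitions explicitly. A valid loop $\gamma$ is a simple cycle of $\Gamma_{2n}^*$ through $v_\infty^*$. Its primal edges $e$ with $e^*\in\gamma$ form a minimal edge cut, and this cut separates $V(\Gamma_{2n})$ into two connected sets $A$ and $B$ with $|A|=|B|=n$. Conversely, any partition $V=A\sqcup B$ into two connected halves of size $n$ gives a minimal cut. By planar duality this cut is a simple cycle of $\Gamma_{2n}^*$. It passes through $v_\infty^*$ because both $A$ and $B$ meet the outer face (every vertex of the strip lies on the outer face). So the plan is to prove that $\Gamma_{2n}$ has exactly $n$ \emph{balanced partitions} (unordered pairs $\{A,B\}$ as above).

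\textbf{Step 1 (structure of the dual).} Removing $v_\infty^*$ from $\Gamma_{2n}^*$ leaves a path $t_1,t_2,\dots,t_{2n-2}$ of triangles, ordered left to right, with consecutive triangles sharing a vertical or diagonal edge. Every remaining edge is a boundary edge joining some $t_i$ to $v_\infty^*$. Hence a simple cycle through $v_\infty^*$ has the form: a boundary edge into $t_i$, the chain $t_i,\dots,t_j$, and a boundary edge out of $t_j$, with $i\le j$. The case $i=j$ uses two different boundary edges of the same triangle.

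\textbf{Step 2 (shape of the two sides).} From this form I will read off that one side of the cut consists of the first $a$ vertices of the top row together with the first $b$ vertices of the bottom row, for some $a,b$. The exceptional case is when both crossings of the outer face lie on the same row. Then one side is an interval of that row, and having size $n$ forces it to be the whole row, which is again covered by $(a,b)=(n,0)$ or $(0,n)$. Concretely, the chain $t_i,\dots,t_j$ crosses the strip once, so the cut meets each row in at most one horizontal edge.

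\textbf{Step 3 (sufficiency and counting).} For every $a,b\ge 0$ with $a+b=n$, the prefix set $A_{a,b}$ is connected. If $a,b\ge 1$ it is joined through the vertical edge in column $1$; otherwise it is a whole row. Its complement is the corresponding suffix set, which is connected via the vertical edge in the last column. So all $n+1$ pairs $(a,b)$ give balanced partitions. The only coincidence among them as unordered partitions is $A_{n,0}=V\setminus A_{0,n}$ (top row versus bottom row). For $a,b\ge1$ the complement of a prefix set contains the last column and therefore is not itself a prefix set. This yields exactly $n$ balanced partitions, and hence $n$ valid loops. Note that the count does not depend on the diagonal orientations, since only the vertical edges in the first and last columns were used.

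I expect Step 2 to be the main obstacle. It requires a careful case analysis of which boundary edges a triangle has: the two end triangles also carry the end vertical edges. It also requires an argument that the region cut off by the cycle is exactly a prefix set. I would handle this by induction along the chain: moving from $t_k$ to $t_{k+1}$ adds exactly one vertex to the left side. With that established, the size condition pins down $a+b=n$.
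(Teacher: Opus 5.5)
Your proposal is correct and follows the paper's route. Both arguments identify valid loops with balanced partitions into two connected halves, and both count these as exactly the $n$ partitions in which one side consists of the first $k$ top-row and first $n-k$ bottom-row vertices. The paper only exhibits these $n$ partitions and asserts the count; it never shows that no other balanced partitions exist, and it leaves connectivity of the halves implicit. Your Steps 1--2 fill that gap: you show the dual minus $v_\infty^*$ is a path of $2n-2$ triangles, which forces every such cut to be a prefix/suffix split or a single-row interval. Your handling of the top-row/bottom-row coincidence in Step 3 then gives the exact count of $n$.
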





\begin{proof} Observe that, when a valid loop is formed by the removal of the balanced edge, the vertices of $\Gamma_{2n}$ are partitioned into two sets, each having $n$ vertices (see \Cref{fig:looptodiv} for the case where $n = 5$). We define a \emph{balanced partition} as a partition of the vertices of $\Gamma_{2n}$ into two disjoint, sets having $n$ vertices. For convenience, denote by $L$ and $P$ the set of valid loops and balanced partitions respectively. There is a bijective correspondence between valid loops and balanced partitions. Therefore, $L$ and $P$ have the same cardinality. We now count the number of balanced partitions.

\begin{figure}[H]
    \centering
    \begin{tikzpicture}[scale=1, line join=round, line cap=round]
    
    \definecolor{gridgray}{RGB}{230, 230, 230}
    \definecolor{primalred}{RGB}{234, 67, 72}
    \definecolor{primalpurple}{RGB}{170, 50, 200}
    \definecolor{centralgreen}{RGB}{100, 200, 100}
    \definecolor{dualblue}{RGB}{66, 133, 244}
    \definecolor{dualyellow}{RGB}{250, 190, 75}

    \begin{scope}[yshift=1.8cm]
        
        \foreach \y in {0,1} {
            \draw[gridgray, line width=1.2pt] (0,\y) -- (4,\y);
        }
        \foreach \x in {0,...,4} {
            \draw[gridgray, line width=1.2pt] (\x,0) -- (\x,1);
        }
        \foreach \x in {0,...,3} {
            \draw[gridgray, line width=1.2pt] (\x,1) -- (\x+1,0);
        }

        \draw[centralgreen, line width=1.5pt] (2,0) -- (2,1);

        \draw[black, line width=1.5pt] (0,1) -- (1,1);
        \draw[black, line width=1.5pt] (0,0) -- (1,0);
        \draw[black, line width=1.5pt] (1,0) -- (1,1);
        \draw[black, line width=1.5pt] (1,1) -- (2,0);
        
        \draw[black, line width=1.5pt] (2,1) -- (3,1);
        \draw[black, line width=1.5pt] (2,1) -- (3,0);
        \draw[black, line width=1.5pt] (3,0) -- (4,0);
        \draw[black, line width=1.5pt] (4,0) -- (4,1);

        \draw[dualyellow, line width=1.5pt] (-0.35, 0.3) -- (0.35, 0.3) -- (0.7, 0.7);
        
        \draw[dualyellow, line width=1.5pt] (1.25, 0.3) -- (1.25, -0.3);
        \draw[dualyellow, line width=1.5pt] (1.7, 0.7) -- (1.7, 1.3);
        
        \draw[dualyellow, line width=1.5pt] (2.25, 0.3) -- (2.25, -0.3);
        
        \draw[dualyellow, line width=1.5pt] (2.7, 0.7) -- (3.25, 0.3) -- (3.75, 0.7);
        
        \draw[dualyellow, line width=1.5pt] (3.75, 0.7) -- (3.75, 1.3);

        \fill[dualblue] (0.35, 0.3) circle (2.5pt);
        \fill[dualblue] (0.7, 0.7) circle (2.5pt);
        \fill[dualblue] (1.25, 0.3) circle (2.5pt);
        \fill[dualblue] (1.7, 0.7) circle (2.5pt);
        \fill[dualblue] (2.25, 0.3) circle (2.5pt);
        \fill[dualblue] (2.7, 0.7) circle (2.5pt);
        \fill[dualblue] (3.25, 0.3) circle (2.5pt);
        \fill[dualblue] (3.75, 0.7) circle (2.5pt);

        \foreach \p in {(0,0), (1,0), (2,0), (0,1), (1,1)} {
            \fill[primalred] \p circle (2.5pt);
        }
        \foreach \p in {(2,1), (3,0), (4,0), (3,1), (4,1)} {
            \fill[primalpurple] \p circle (2.5pt);
        }
    \end{scope}

    \begin{scope}[yshift=0cm]
        
        \foreach \y in {0,1} {
            \draw[gridgray, line width=1.2pt] (0,\y) -- (4,\y);
        }
        \foreach \x in {0,...,4} {
            \draw[gridgray, line width=1.2pt] (\x,0) -- (\x,1);
        }
        \foreach \x in {0,...,3} {
            \draw[gridgray, line width=1.2pt] (\x,1) -- (\x+1,0);
        }


        \draw[black, line width=1.5pt] (0,1) -- (1,1);
        \draw[black, line width=1.5pt] (0,0) -- (1,0);
        \draw[black, line width=1.5pt] (1,0) -- (1,1);
        \draw[black, line width=1.5pt] (1,1) -- (2,0);
        
        \draw[black, line width=1.5pt] (2,1) -- (3,1);
        \draw[black, line width=1.5pt] (2,1) -- (3,0);
        \draw[black, line width=1.5pt] (3,0) -- (4,0);
        \draw[black, line width=1.5pt] (4,0) -- (4,1);

        \draw[dualyellow, line width=1.5pt] (-0.35, 0.3) -- (0.35, 0.3) -- (0.7, 0.7);
        
        \draw[dualyellow, line width=1.5pt] (1.25, 0.3) -- (1.25, -0.3);
        \draw[dualyellow, line width=1.5pt] (1.7, 0.7) -- (1.7, 1.3);
        
        \draw[dualyellow, line width=1.5pt] (1.7, 0.7) -- (2.25, 0.3);
        
        \draw[dualyellow, line width=1.5pt] (2.25, 0.3) -- (2.25, -0.3);
        
        \draw[dualyellow, line width=1.5pt] (2.7, 0.7) -- (3.25, 0.3) -- (3.75, 0.7);
        
        \draw[dualyellow, line width=1.5pt] (3.75, 0.7) -- (3.75, 1.3);

        \fill[dualblue] (0.35, 0.3) circle (2.5pt);
        \fill[dualblue] (0.7, 0.7) circle (2.5pt);
        \fill[dualblue] (1.25, 0.3) circle (2.5pt);
        \fill[dualblue] (1.7, 0.7) circle (2.5pt);
        \fill[dualblue] (2.25, 0.3) circle (2.5pt);
        \fill[dualblue] (2.7, 0.7) circle (2.5pt);
        \fill[dualblue] (3.25, 0.3) circle (2.5pt);
        \fill[dualblue] (3.75, 0.7) circle (2.5pt);

        \foreach \p in {(0,0), (1,0), (2,0), (0,1), (1,1)} {
            \fill[primalred] \p circle (2.5pt);
        }
        \foreach \p in {(2,1), (3,0), (4,0), (3,1), (4,1)} {
            \fill[primalpurple] \p circle (2.5pt);
        }
    \end{scope}

\end{tikzpicture}
    \caption{The removal of the balanced edge in $\Gamma_{10}$ (green edge at top) creates a valid loop in the dual $\Gamma^{\ast}_{10}$ that gives a balanced partition of the vertex set of $\Gamma_{10}$.}
    \label{fig:looptodiv}
\end{figure}

        Let the two sets of vertices we get in a balanced partition be $A$ and $B$, colored by red and purple respectively in \Cref{fig:looptodiv} and \Cref{fig:balancedpartitionex}. We can get a balanced partition by taking in $A$, the leftmost $k$ vertices from the top row and leftmost $n - k$ vertices from the bottom row where $k \in \{ 1, 2, 3, \dots, n - 1, n\}$ and including the rest of the vertices of $\Gamma_{2n}$ in $B$.
        
\begin{figure}[H]
    \centering
    \begin{tikzpicture}[scale=1, line join=round, line cap=round]
    
    \definecolor{gridgray}{RGB}{235, 235, 235}
    \definecolor{primalred}{RGB}{234, 67, 72}
    \definecolor{primalpurple}{RGB}{170, 50, 200}

    \tikzset{
        reddot/.style={circle, fill=primalred, minimum size=12pt, inner sep=0},
        purpledot/.style={circle, fill=primalpurple, minimum size=12pt, inner sep=0},
        redlabel/.style={circle, fill=primalred, text=white, font=\sffamily\bfseries\scriptsize, minimum size=12pt, inner sep=0}
    }
 \usetikzlibrary{decorations.pathreplacing}

\draw[decorate, decoration={brace, amplitude=5pt, raise=8pt}] 
  (0, 1) -- (3, 1) 
  node[pos=0.5, above=12pt, font=\small] {$k = 4$};

    \begin{scope}[yshift=2.5cm]
        \draw[decorate, decoration={brace, amplitude=5pt, raise=8pt}] 
  (0, 1) -- (4, 1) 
  node[pos=0.5, above=12pt, font=\small] {$k = 5$};

        \foreach \y in {0,1} {
            \draw[gridgray, line width=1.2pt] (0,\y) -- (4,\y);
        }
        \foreach \x in {0,...,4} {
            \draw[gridgray, line width=1.2pt] (\x,0) -- (\x,1);
        }
        \foreach \x in {0,...,3} {
            \draw[gridgray, line width=1.2pt] (\x,1) -- (\x+1,0);
        }

        \foreach \x in {0,...,4} {
            \node[purpledot] at (\x, 0) {};
            \node[redlabel] at (\x, 1) {};
        }
    \end{scope}


    \begin{scope}[yshift=0cm]
        \foreach \y in {0,1} {
            \draw[gridgray, line width=1.2pt] (0,\y) -- (4,\y);
        }
        \foreach \x in {0,...,4} {
            \draw[gridgray, line width=1.2pt] (\x,0) -- (\x,1);
        }
        \foreach \x in {0,...,3} {
            \draw[gridgray, line width=1.2pt] (\x,1) -- (\x+1,0);
        }

        \node[reddot] at (0, 0) {};
        \foreach \x in {1,...,4} {
            \node[purpledot] at (\x, 0) {};
        }

        \foreach \x in {0,...,3} {
            \node[redlabel] at (\x, 1) {};
        }
        \node[purpledot] at (4, 1) {};
    \end{scope}

\end{tikzpicture}
    \caption{Two balanced partitions corresponding to $k$ = 5 and $k$ = 4 respectively.}
    \label{fig:balancedpartitionex}
\end{figure}

    From this, we can see that there are a total of $n$ balanced partitions. Since $L$ and $P$ have the same cardinality, and $P$ has $n$ elements, $L$ must also have $n$ elements. 

    \par
    This proves our claim.
    \end{proof}

Next, following the indexing convention of Gallagher and Tapp \cite{gal25}, we label the $n$ partitions as shown in Figure \ref{fig:partitions_ex}. Define $m=\lfloor n/2\rfloor$. The indexing is chosen so that there are $m$ indexed partitions, and reflection across the vertical centerline pairs most partitions. Specifically, the non-symmetric partitions $1,\dots,m-1$ for even $n$ and $0,\dots,m-1$ for odd $n$ are paired with their reflections. For convenience, indexed partitions are precisely those for which the left side of the partition contains at least as many bottom-row vertices as top-row vertices.


\begin{figure}[H]
    \centering
    \begin{tikzpicture}[xscale=0.4,yscale=0.4]
        \node [font=\fontsize{6}{14}, draw] at (3.5,14.5){$n=9$};

        n=9
        i=4
        \node[font=\fontsize{6}{1}] at (-1.5,0.5) {$i=4$};
        \node[font=\fontsize{6}{1}] at (9.25,0.5) {$\times1$};
        \fill[red!90!black] (0,0) circle(8pt);
        \fill[red!90!black] (1,0) circle(8pt);
        \fill[red!90!black] (2,0) circle(8pt);
        \fill[red!90!black] (3,0) circle(8pt);
        \fill[red!90!black] (4,0) circle(8pt);
        \fill[red!90!black] (5,0) circle(8pt);
        \fill[red!90!black] (6,0) circle(8pt);
        \fill[red!90!black] (7,0) circle(8pt);
        \fill[red!90!black] (8,0) circle(8pt);
        \fill[red!90!black] (0,1) circle(8pt);
        \fill[red!90!black] (1,1) circle(8pt);
        \fill[red!90!black] (2,1) circle(8pt);
        \fill[red!90!black] (3,1) circle(8pt);
        \fill[red!90!black] (4,1) circle(8pt);
        \fill[red!90!black] (5,1) circle(8pt);
        \fill[red!90!black] (6,1) circle(8pt);
        \fill[red!90!black] (7,1) circle(8pt);
        \fill[red!90!black] (8,1) circle(8pt);
        \draw[ultra thick, purple!60] (3.5,13.5)--(3.5,12.5)--(4.5,12.5)--(4.5,11.5);
        
        i=3
        \node[font=\fontsize{6}{1}] at (-1.5,3.5) {$i=3$};
        \node[font=\fontsize{6}{1}] at (9.25,3.5) {$\times2$};
        \fill[red!90!black] (0,3) circle(8pt);
        \fill[red!90!black] (1,3) circle(8pt);
        \fill[red!90!black] (2,3) circle(8pt);
        \fill[red!90!black] (3,3) circle(8pt);
        \fill[red!90!black] (4,3) circle(8pt);
        \fill[red!90!black] (5,3) circle(8pt);
        \fill[red!90!black] (6,3) circle(8pt);
        \fill[red!90!black] (7,3) circle(8pt);
        \fill[red!90!black] (8,3) circle(8pt);
        \fill[red!90!black] (0,4) circle(8pt);
        \fill[red!90!black] (1,4) circle(8pt);
        \fill[red!90!black] (2,4) circle(8pt);
        \fill[red!90!black] (3,4) circle(8pt);
        \fill[red!90!black] (4,4) circle(8pt);
        \fill[red!90!black] (5,4) circle(8pt);
        \fill[red!90!black] (6,4) circle(8pt);
        \fill[red!90!black] (7,4) circle(8pt);
        \fill[red!90!black] (8,4) circle(8pt);
        \draw[ultra thick, purple!60] (2.5,10.5)--(2.5,9.5)--(5.5,9.5)--(5.5,8.5);

        i=2
        \node[font=\fontsize{6}{1}] at (-1.5,6.5) {$i=2$};
        \node[font=\fontsize{6}{1}] at (9.25,6.5) {$\times2$};
        \fill[red!90!black] (0,6) circle(8pt);
        \fill[red!90!black] (1,6) circle(8pt);
        \fill[red!90!black] (2,6) circle(8pt);
        \fill[red!90!black] (3,6) circle(8pt);
        \fill[red!90!black] (4,6) circle(8pt);
        \fill[red!90!black] (5,6) circle(8pt);
        \fill[red!90!black] (6,6) circle(8pt);
        \fill[red!90!black] (7,6) circle(8pt);
        \fill[red!90!black] (8,6) circle(8pt);
        \fill[red!90!black] (0,7) circle(8pt);
        \fill[red!90!black] (1,7) circle(8pt);
        \fill[red!90!black] (2,7) circle(8pt);
        \fill[red!90!black] (3,7) circle(8pt);
        \fill[red!90!black] (4,7) circle(8pt);
        \fill[red!90!black] (5,7) circle(8pt);
        \fill[red!90!black] (6,7) circle(8pt);
        \fill[red!90!black] (7,7) circle(8pt);
        \fill[red!90!black] (8,7) circle(8pt);
        \draw[ultra thick, purple!60] (1.5,7.5)--(1.5,6.5)--(6.5,6.5)--(6.5,5.5);

        i=1
        \node[font=\fontsize{6}{1}] at (-1.5,9.5) {$i=1$};
        \node[font=\fontsize{6}{1}] at (9.25,9.5) {$\times2$};
        \fill[red!90!black] (0,9) circle(8pt);
        \fill[red!90!black] (1,9) circle(8pt);
        \fill[red!90!black] (2,9) circle(8pt);
        \fill[red!90!black] (3,9) circle(8pt);
        \fill[red!90!black] (4,9) circle(8pt);
        \fill[red!90!black] (5,9) circle(8pt);
        \fill[red!90!black] (6,9) circle(8pt);
        \fill[red!90!black] (7,9) circle(8pt);
        \fill[red!90!black] (8,9) circle(8pt);
        \fill[red!90!black] (0,10) circle(8pt);
        \fill[red!90!black] (1,10) circle(8pt);
        \fill[red!90!black] (2,10) circle(8pt);
        \fill[red!90!black] (3,10) circle(8pt);
        \fill[red!90!black] (4,10) circle(8pt);
        \fill[red!90!black] (5,10) circle(8pt);
        \fill[red!90!black] (6,10) circle(8pt);
        \fill[red!90!black] (7,10) circle(8pt);
        \fill[red!90!black] (8,10) circle(8pt);
        \draw[ultra thick, purple!60] (0.5,4.5)--(0.5,3.5)--(7.5,3.5)--(7.5,2.5);

        i=0
        \node[font=\fontsize{6}{1}] at (-1.5,12.5) {$i=0$};
        \node[font=\fontsize{6}{1}] at (9.25,12.5) {$\times2$};
        \fill[red!90!black] (0,12) circle(8pt);
        \fill[red!90!black] (1,12) circle(8pt);
        \fill[red!90!black] (2,12) circle(8pt);
        \fill[red!90!black] (3,12) circle(8pt);
        \fill[red!90!black] (4,12) circle(8pt);
        \fill[red!90!black] (5,12) circle(8pt);
        \fill[red!90!black] (6,12) circle(8pt);
        \fill[red!90!black] (7,12) circle(8pt);
        \fill[red!90!black] (8,12) circle(8pt);
        \fill[red!90!black] (0,13) circle(8pt);
        \fill[red!90!black] (1,13) circle(8pt);
        \fill[red!90!black] (2,13) circle(8pt);
        \fill[red!90!black] (3,13) circle(8pt);
        \fill[red!90!black] (4,13) circle(8pt);
        \fill[red!90!black] (5,13) circle(8pt);
        \fill[red!90!black] (6,13) circle(8pt);
        \fill[red!90!black] (7,13) circle(8pt);
        \fill[red!90!black] (8,13) circle(8pt);
        \draw[ultra thick, purple!60] (-.5,.5)--(8.5,0.5);

        dividing line
        \draw[thick] (10,13)--(10,0);
    \end{tikzpicture}
    \begin{tikzpicture}[xscale=0.4,yscale=0.4]
        \node [font=\fontsize{6}{14}, draw] at (3.5,14.5){$n=8$};
        n=9
        i=4
        \node[font=\fontsize{6}{1}] at (8.25,0.5) {$\times1$};
        \fill[red!90!black] (0,0) circle(8pt);
        \fill[red!90!black] (1,0) circle(8pt);
        \fill[red!90!black] (2,0) circle(8pt);
        \fill[red!90!black] (3,0) circle(8pt);
        \fill[red!90!black] (4,0) circle(8pt);
        \fill[red!90!black] (5,0) circle(8pt);
        \fill[red!90!black] (6,0) circle(8pt);
        \fill[red!90!black] (7,0) circle(8pt);
        \fill[red!90!black] (0,1) circle(8pt);
        \fill[red!90!black] (1,1) circle(8pt);
        \fill[red!90!black] (2,1) circle(8pt);
        \fill[red!90!black] (3,1) circle(8pt);
        \fill[red!90!black] (4,1) circle(8pt);
        \fill[red!90!black] (5,1) circle(8pt);
        \fill[red!90!black] (6,1) circle(8pt);
        \fill[red!90!black] (7,1) circle(8pt);
        \draw[ultra thick, purple!60] (3.5,13.5)--(3.5,11.5);
        
        i=3
        \node[font=\fontsize{6}{1}] at (8.25,3.5) {$\times2$};
        \fill[red!90!black] (0,3) circle(8pt);
        \fill[red!90!black] (1,3) circle(8pt);
        \fill[red!90!black] (2,3) circle(8pt);
        \fill[red!90!black] (3,3) circle(8pt);
        \fill[red!90!black] (4,3) circle(8pt);
        \fill[red!90!black] (5,3) circle(8pt);
        \fill[red!90!black] (6,3) circle(8pt);
        \fill[red!90!black] (7,3) circle(8pt);
        \fill[red!90!black] (0,4) circle(8pt);
        \fill[red!90!black] (1,4) circle(8pt);
        \fill[red!90!black] (2,4) circle(8pt);
        \fill[red!90!black] (3,4) circle(8pt);
        \fill[red!90!black] (4,4) circle(8pt);
        \fill[red!90!black] (5,4) circle(8pt);
        \fill[red!90!black] (6,4) circle(8pt);
        \fill[red!90!black] (7,4) circle(8pt);
        \draw[ultra thick, purple!60] (2.5,10.5)--(2.5,9.5)--(4.5,9.5)--(4.5,8.5);

        i=2
        \node[font=\fontsize{6}{1}] at (8.25,6.5) {$\times2$};
        \fill[red!90!black] (0,6) circle(8pt);
        \fill[red!90!black] (1,6) circle(8pt);
        \fill[red!90!black] (2,6) circle(8pt);
        \fill[red!90!black] (3,6) circle(8pt);
        \fill[red!90!black] (4,6) circle(8pt);
        \fill[red!90!black] (5,6) circle(8pt);
        \fill[red!90!black] (6,6) circle(8pt);
        \fill[red!90!black] (7,6) circle(8pt);
        \fill[red!90!black] (0,7) circle(8pt);
        \fill[red!90!black] (1,7) circle(8pt);
        \fill[red!90!black] (2,7) circle(8pt);
        \fill[red!90!black] (3,7) circle(8pt);
        \fill[red!90!black] (4,7) circle(8pt);
        \fill[red!90!black] (5,7) circle(8pt);
        \fill[red!90!black] (6,7) circle(8pt);
        \fill[red!90!black] (7,7) circle(8pt);
        \draw[ultra thick, purple!60] (1.5,7.5)--(1.5,6.5)--(5.5,6.5)--(5.5,5.5);
        
        i=1
        \node[font=\fontsize{6}{1}] at (8.25,9.5) {$\times2$};
        \fill[red!90!black] (0,9) circle(8pt);
        \fill[red!90!black] (1,9) circle(8pt);
        \fill[red!90!black] (2,9) circle(8pt);
        \fill[red!90!black] (3,9) circle(8pt);
        \fill[red!90!black] (4,9) circle(8pt);
        \fill[red!90!black] (5,9) circle(8pt);
        \fill[red!90!black] (6,9) circle(8pt);
        \fill[red!90!black] (7,9) circle(8pt);
        \fill[red!90!black] (0,10) circle(8pt);
        \fill[red!90!black] (1,10) circle(8pt);
        \fill[red!90!black] (2,10) circle(8pt);
        \fill[red!90!black] (3,10) circle(8pt);
        \fill[red!90!black] (4,10) circle(8pt);
        \fill[red!90!black] (5,10) circle(8pt);
        \fill[red!90!black] (6,10) circle(8pt);
        \fill[red!90!black] (7,10) circle(8pt);
        \draw[ultra thick, purple!60] (0.5,4.5)--(0.5,3.5)--(6.5,3.5)--(6.5,2.5);

        i=0
        \node[font=\fontsize{6}{1}] at (8.25,12.5) {$\times1$};
        \fill[red!90!black] (0,12) circle(8pt);
        \fill[red!90!black] (1,12) circle(8pt);
        \fill[red!90!black] (2,12) circle(8pt);
        \fill[red!90!black] (3,12) circle(8pt);
        \fill[red!90!black] (4,12) circle(8pt);
        \fill[red!90!black] (5,12) circle(8pt);
        \fill[red!90!black] (6,12) circle(8pt);
        \fill[red!90!black] (7,12) circle(8pt);
        \fill[red!90!black] (0,13) circle(8pt);
        \fill[red!90!black] (1,13) circle(8pt);
        \fill[red!90!black] (2,13) circle(8pt);
        \fill[red!90!black] (3,13) circle(8pt);
        \fill[red!90!black] (4,13) circle(8pt);
        \fill[red!90!black] (5,13) circle(8pt);
        \fill[red!90!black] (6,13) circle(8pt);
        \fill[red!90!black] (7,13) circle(8pt);
        \draw[ultra thick, purple!60] (-.5,.5)--(7.5,.5);
    \end{tikzpicture}
    \caption{The $n$ possible ways to partition the vertices of $\Gamma_{2n}$ equally for $n = 9$ and $n = 8$. Inspired by \cite{gal25}.}
    \label{fig:partitions_ex}
\end{figure}

By Claim \ref{claim:loops}, each partition corresponds to a valid loop in $\Gamma_{2n}^*$. We now use $\gamma_i$ to denote the loop that corresponds to the indexed partition $i$, and $\mu_i$ to denote the loop that corresponds to the reflection of partition $i$. Note that since diagonals of faces break the symmetry that exists in a grid graph, $\gamma_i$ does not necessarily have the same length as $\mu_i.$ Therefore, it is important to distinguish them.

Now, we will use Figure \ref{fig:counting_balanced_trees} to illustrate our strategy of counting the number of balanced spanning trees associated with each valid loop. Each valid loop divides the vertex set of the graph into two equal sets. A balanced spanning tree can be obtained by first forming a spanning tree on either side of the loop, then connecting the two trees by replacing one edge of the valid loop with the edge it intersects in the graph $\Gamma_{2n}$. The number of spanning trees on either side of the loop equals the number of spanning trees in \emph{end blocks} ({highlighted} in yellow) of their respective sides. Therefore, if we denote the number of spanning trees in the left and right end blocks by $T_L$ and $T_R$ respectively, then each valid loop is associated with length$(\gamma_i)T_L T_R$ balanced spanning trees. Note that $\gamma_i$ can be interchanged with $\mu_i$ here.
\begin{figure}[H]
    \begin{tikzpicture}[thick,xscale=0.8,yscale=0.8]
        \fill[yellow!30] (0,0) rectangle (2,1);
        \fill[yellow!30] (6,1)--(9,1)--(9,0)--(7,0);
        \draw grid(9,1);
        \draw (0,1)--(1,0)--(2,1) (2,0)--(3,1) (3,0)--(4,1)--(5,0)--(6,1)--(7,0) (7,1)--(8,0) (8,1)--(9,0);
        \draw[red, very thick] (2.25,1.25)--(2.25,0.75)--(2.75,0.25)--(3.25,0.75)--(3.75,0.25)--(4.25,0.25)--(4.75,0.75)--(5.25,0.75)--(5.75,0.25)--(6.25,0.25)--(6.25,-0.25);
        
        \fill (2.25,0.75) circle (1.1pt);
        \fill (2.75,0.25) circle (1.1pt);
        \fill (3.25,0.75) circle (1.1pt);
        \fill (3.75,0.25) circle (1.1pt);
        \fill (4.25,0.25) circle (1.1pt);
        \fill (4.75,0.75) circle (1.1pt);
        \fill (5.25,0.75) circle (1.1pt);
        \fill (5.75,0.25) circle (1.1pt);
        \fill (6.25,0.25) circle (1.1pt);

        nodes
            \fill (0,0) circle (1.5pt);
            \fill (1,0) circle (1.5pt);
            \fill (2,0) circle (1.5pt);
            \fill (3,0) circle (1.5pt);
            \fill (4,0) circle (1.5pt);
            \fill (5,0) circle (1.5pt);
            \fill (6,0) circle (1.5pt);
            \fill (7,0) circle (1.5pt);
            \fill (8,0) circle (1.5pt);
            \fill (9,0) circle (1.5pt);
            \fill (0,1) circle (1.5pt);
            \fill (1,1) circle (1.5pt);
            \fill (2,1) circle (1.5pt);
            \fill (3,1) circle (1.5pt);
            \fill (4,1) circle (1.5pt);
            \fill (5,1) circle (1.5pt);
            \fill (6,1) circle (1.5pt);
            \fill (7,1) circle (1.5pt);
            \fill (8,1) circle (1.5pt);
            \fill (9,1) circle (1.5pt);
    \end{tikzpicture}
    \caption{Example of a valid loop}
    \label{fig:counting_balanced_trees}
\end{figure}

Observe that both the length of the loop and the sizes of the end blocks depend only on the types of diagonals in the leftmost and rightmost squares that the loop passes through. For convenience, we call them \emph{start} and \emph{end squares.} This observation motivates the following structural parameter.

\begin{defn}[Structural parameter $\sigma_i$]
Given a $2$-by-$n$ triangular strip lattice, set $m=\lfloor n/2 \rfloor$. Let the type of diagonals that connect the top-left and bottom-right vertices of a square be denoted by ``$\backslash,$'' and the type of diagonals that connect the top-right and bottom-left vertices of the square be denoted by ``$/.$'' Define $\sigma_i$ as a parameter that encodes the symmetry of the diagonals in the start and end squares of the loop $\gamma_i$. If the $(m-i)$th square and the $(n-m+i)$th square have the same type of diagonals, i.e., $(\backslash,\backslash)$ or $(/,/)$, then $\sigma_i=1.$ Otherwise, if the squares have different types of diagonals, i.e., $(\backslash,/)$ or $(/,\backslash)$, then $\sigma_i = 0.$
\end{defn}

Using the indexing systems and the defined structural parameter, we obtain the following formulae.
\begin{theorem}\label{thm:Snforheterogeneous}
Let $S_{2n}$ denote the number of balanced spanning trees in $\Gamma_{2n}$. If $n$ is even, i.e., $n=2m$ for some $m$, then
\begin{align*}
S_{2n} = 2n-1+ 3T_{2m}^2 +\sum_{i=1}^{m-1}[\sigma_i((4i+1)T_{2(m-i)+1}^2+(4i+3)T_{2(m-i)}^2)\\
\quad +(1-\sigma_i)(8i+4)T_{2(m-i)+1}T_{2(m-i)}].
\end{align*}
If $n$ is odd, i.e., $n = 2m+1$ for some $m$, then
\begin{align*}
S_{2n} = 2n-1+ \sum_{i=0}^{m-1}[\sigma_i((4i+3)T_{2(m-i)+1}^2+(4i+5)T_{2(m-i)}^2)\\
\quad+(1-\sigma_i)(8i+8)T_{2(m-i)+1}T_{2(m-i)}].
\end{align*}
\end{theorem}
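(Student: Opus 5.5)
The plan is to write $S_{2n}$ as a sum over balanced partitions, and then to compute each summand from the local diagonal structure near the two ends of the cut.

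\emph{Reduction to a sum over partitions.} A spanning tree has at most one balanced edge. If two distinct edges $e\neq e'$ both split $\mathcal{T}$ into halves of size $n$, then the component of $\mathcal{T}\setminus\{e\}$ containing $e'$ would have to be split by $e'$ into a piece of size $n$ and a nonempty remainder. That is impossible inside a component of size $n$. Hence every balanced spanning tree arises exactly once from the data (balanced partition $\{A,B\}$, spanning tree of $\Gamma[A]$, spanning tree of $\Gamma[B]$, edge of $\Gamma$ between $A$ and $B$). By Claim~\ref{claim:loops} and the correspondence between valid loops and balanced partitions, this gives
\[
S_{2n}=\sum_{\text{partitions}} \operatorname{length}(\text{loop})\cdot T(\Gamma[A])\,T(\Gamma[B]).
\]
Here the length of the loop is the number of primal edges crossing the cut. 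The partitions are exactly the $n$ "staircase" ones described in the proof of Claim~\ref{claim:loops}. I would then group them by the indexing of Figure~\ref{fig:partitions_ex}: each indexed $\gamma_i$ is paired with its reflection $\mu_i$, and the self-symmetric ones are treated separately.

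\emph{The self-symmetric partitions.} The all-top versus all-bottom split has both sides equal to paths, so each side has exactly one spanning tree. The crossing edges are the $n$ verticals and the $n-1$ diagonals, which contributes $2n-1$ in both parity cases. For $n=2m$ there is also the straight vertical cut down the middle. Each side is then a copy of $\Gamma_{2m}$, and the cut crosses two horizontals and one diagonal, giving $3T_{2m}^2$. For $n$ odd there is no second symmetric cut, which is why the odd formula has no such term.

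\emph{The paired partitions.} For a staircase cut whose top and bottom break points are offset, the cut runs through a block of consecutive squares, from the start square $m-i$ to the end square $n-m+i$. Each side consists of a full $2\times \ell$ block (the \emph{end block}), with $\ell=m-i$, together with a single-row path hanging off it. The pendant path is attached by a chain of bridges, so every spanning tree of that side restricts to a spanning tree of the end block, possibly enlarged by one extra vertex. That extra vertex belongs to the end block exactly when the diagonal in the start (respectively end) square joins it to both corner vertices of the block. In that case the block is a copy of $\Gamma_{2\ell+1}$ rather than $\Gamma_{2\ell}$.

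So each side contributes $T_{2(m-i)}$ or $T_{2(m-i)+1}$ according to the orientation of the diagonal in its end square. The loop length is the number of verticals and diagonals strictly inside the run, plus the two end crossings. This length also depends only on the two end diagonals: it is shorter by one at each end whose diagonal is absorbed into a side.

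Next I would tabulate the four orientation pairs for $(\gamma_i,\mu_i)$. Reflection swaps the roles of start and end square and reverses the diagonal types. Consequently, when $\sigma_i=1$, one of the two loops gets $T_{2(m-i)+1}^2$ and the other gets $T_{2(m-i)}^2$, with lengths $4i+1$ and $4i+3$ in the even case and $4i+3$ and $4i+5$ in the odd case. When $\sigma_i=0$, both loops get the mixed product $T_{2(m-i)+1}T_{2(m-i)}$, with lengths summing to $8i+4$ (respectively $8i+8$). Summing over $i$ then gives the stated formulas.

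\emph{Main obstacle.} I expect the hardest step to be this last bookkeeping: establishing the exact loop lengths $4i+1,\,4i+3$ (and their odd-case analogues) as functions of $i$, and verifying, uniformly in the diagonal configuration, that the end-block identification gives $\Gamma_{2\ell+1}$ versus $\Gamma_{2\ell}$ precisely as dictated by $\sigma_i$. I would first check the count on the examples of Figures~\ref{fig:overview} and~\ref{fig:counting_balanced_trees}. After that I would prove the length formula by induction on $i$, since lengthening the staircase by one column on each side adds two verticals and two diagonals, that is, $4$ crossing edges.
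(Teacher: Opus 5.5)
Your proposal is correct and follows essentially the same route as the paper. Both write $S_{2n}$ as a sum over the $n$ balanced partitions (valid loops) of loop length times $T_L T_R$, handle the all-top/all-bottom cut ($2n-1$) and, for even $n$, the middle vertical cut ($3T_{2m}^2$) separately, and for each pair $\gamma_i,\mu_i$ read off both the end-block sizes ($T_{2(m-i)+1}$ versus $T_{2(m-i)}$) and the lengths from the diagonals of the start and end squares. Your argument that a spanning tree has at most one balanced edge, and your remark that a side absorbs the extra vertex exactly when the corresponding end diagonal is not crossed, make explicit two points the paper leaves implicit; the paper counts lengths directly via the $4i-1$ edges from the middle squares rather than by induction on $i$.
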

\par \medskip

\begin{example}\label{example:balancedspanningtrees} An example of how the formulae work in practice is as follows. Consider the graph $\Gamma_{20}$ in Figure \ref{fig:counting_balanced_trees}. Since $n=10$ is even, we apply the formula for $n$ even. First, we compute
\[2n-1 = 2(10)-1=19.\]
Next, observing that $m=\lfloor n/2 \rfloor = 5,$ and using the recurrence from Theorem \cref{thm:spanningtrees} or equation \ref{eqn:unifiedeqn} in the next section to compute $T_{2m}$, we obtain 
\[3T_{2m}^2=3T_{10}^2=3(2584)^2=20,031,168.\]
Finally, we calculate the value associated with each index $i$ in the summand. For $i=1,$ observe that the diagonals of the start square and end square are the same type, so $\sigma_1=1.$ Substituting $\sigma_i=1$ for index $i=1$ in the summand, we obtain 
\begin{align*}(4i+1)T_{2(m-i)+1}^2 + (4i+3)T_{2(m-i)}^2 &= 5T_{9}^2+7T_8^2
\\ &=5(987)^2+7(377)^2
\\ &=5,865,748.\end{align*}
For $i=2,$ notice that the diagonals in the start and end squares are different, so $\sigma_2=0.$ Substituting this into the $i=2$ term in the summand, we obtain
\begin{align*}
(8i+4)T_{2(m-i)+1}T_{2(m-i)}&=20T_7T_6\\
&=20(144)(55) \\
&=158,400.
\end{align*}
Repeat this process for the remaining indices: $i=3$ gives $4704,$ and $i=4$ gives $172.$ The total number of balanced spanning trees of $\Gamma_{20}$ equals the sum of all calculated results, which is $26,060,211.$

 
\end{example} 

\noindent \textbf{Proof of Theorem \ref{thm:Snforheterogeneous}.} We now prove Theorem \ref{thm:Snforheterogeneous}.

\begin{proof}[Even Case] 
    We first consider the even case where $n=2m$.\\
    \indent We begin with the symmetric partitions $i=0$ and $i=m$. For $i=0$, length$(\gamma_0)$ is always $3$, and $\gamma_0$ divides the graph into two end blocks, each consisting of $2m$ vertices. As illustrated in Figure $\ref{fig:counting_balanced_trees}$, $\gamma_0$ therefore contributes $3T_{2m}^2$ balanced spanning trees. For $i=m,$ the loop $\gamma_m$ has length $2n-1$. In addition, on each half of the $m$-th partition, there is only one possible spanning tree. Thus, the number of balanced spanning trees associated with $\gamma_m$ is $2n-1.$ See Figure \ref{fig:ex_n=10,i=0,i=m}.
    \begin{figure}[H]
    \centering
        \begin{tikzpicture}[semithick,xscale=0.8,yscale=0.8]
            \fill[yellow!30] (0,0) rectangle (4,1);
            \fill[yellow!30] (5,0) rectangle (9,1);
            \draw[] grid(9,1);
            \draw[] (0,1)--(1,0)--(2,1) (2,0)--(3,1) (3,0)--(4,1)--(5,0)--(6,1)--(7,0) (7,1)--(8,0) (8,1)--(9,0);
            \fill (0,0) circle (1.5pt);
            \fill (1,0) circle (1.5pt);
            \fill (2,0) circle (1.5pt);
            \fill (3,0) circle (1.5pt);
            \fill (4,0) circle (1.5pt);
            \fill (5,0) circle (1.5pt);
            \fill (6,0) circle (1.5pt);
            \fill (7,0) circle (1.5pt);
            \fill (8,0) circle (1.5pt);
            \fill (9,0) circle (1.5pt);
            \fill (0,1) circle (1.5pt);
            \fill (1,1) circle (1.5pt);
            \fill (2,1) circle (1.5pt);
            \fill (3,1) circle (1.5pt);
            \fill (4,1) circle (1.5pt);
            \fill (5,1) circle (1.5pt);
            \fill (6,1) circle (1.5pt);
            \fill (7,1) circle (1.5pt);
            \fill (8,1) circle (1.5pt);
            \fill (9,1) circle (1.5pt);
            \draw[red, very thick] (4.75,1.25)--(4.75,0.75)--(4.25,0.25)--(4.25,-0.25);
            \fill[blue] (4.25,0.25) circle (1.1pt);
            \fill[blue] (4.75,0.75) circle (1.1pt);
            \node at (-1, 0.5) {$i=0$};
        \end{tikzpicture}
    \\\vspace{1cm}
        \begin{tikzpicture}[semithick,xscale=0.8,yscale=0.8]
            \draw[] grid(9,1);
            \draw[] (0,1)--(1,0)--(2,1) (2,0)--(3,1) (3,0)--(4,1)--(5,0)--(6,1)--(7,0) (7,1)--(8,0) (8,1)--(9,0);
            \fill (0,0) circle (1.5pt);
            \fill (1,0) circle (1.5pt);
            \fill (2,0) circle (1.5pt);
            \fill (3,0) circle (1.5pt);
            \fill (4,0) circle (1.5pt);
            \fill (5,0) circle (1.5pt);
            \fill (6,0) circle (1.5pt);
            \fill (7,0) circle (1.5pt);
            \fill (8,0) circle (1.5pt);
            \fill (9,0) circle (1.5pt);
            \fill (0,1) circle (1.5pt);
            \fill (1,1) circle (1.5pt);
            \fill (2,1) circle (1.5pt);
            \fill (3,1) circle (1.5pt);
            \fill (4,1) circle (1.5pt);
            \fill (5,1) circle (1.5pt);
            \fill (6,1) circle (1.5pt);
            \fill (7,1) circle (1.5pt);
            \fill (8,1) circle (1.5pt);
            \fill (9,1) circle (1.5pt);
            \draw[red,thick] (-0.25,0.25)--(0.25,0.25)--(0.75,0.75)--(1.25,0.75)--(1.75,0.25)--(2.25,0.75)--(2.75,0.25)--(3.25,0.75)--(3.75,0.25)--(4.25,0.25)--(4.75,0.75)--(5.25,0.75)--(5.75,0.25)--(6.25,0.25)--(6.75,0.75)--(7.25,0.25)--(7.75,0.75)--(8.25,0.25)--(8.75,0.75)--(9.25,0.75);
            \fill[blue] (0.25,0.25) circle(1.1pt);
            \fill[blue] (0.75,0.75) circle(1.1pt);
            \fill[blue] (1.25,0.75) circle(1.1pt);
            \fill[blue] (1.75,0.25) circle(1.1pt);
            \fill[blue] (2.25,0.75) circle(1.1pt);
            \fill[blue] (2.75,0.25) circle(1.1pt);
            \fill[blue] (3.25,0.75) circle(1.1pt);
            \fill[blue] (3.75,0.25) circle(1.1pt);
            \fill[blue] (4.25,0.25) circle(1.1pt);
            \fill[blue] (4.75,0.75) circle(1.1pt);
            \fill[blue] (5.25,0.75) circle(1.1pt);
            \fill[blue] (5.75,0.25) circle(1.1pt);
            \fill[blue] (6.25,0.25) circle(1.1pt);
            \fill[blue] (6.75,0.75) circle(1.1pt);
            \fill[blue] (7.25,0.25) circle(1.1pt);
            \fill[blue] (7.75,0.75) circle(1.1pt);
            \fill[blue] (8.25,0.25) circle(1.1pt);
            \fill[blue] (8.75,0.75) circle(1.1pt);
            \node at (-1,0.5) {$i=m$};
        \end{tikzpicture}
        \caption{Example for $i=0$ and $i=m$ in a $2$-by-$10$ triangular strip lattice}
        \label{fig:ex_n=10,i=0,i=m}
    \end{figure}
    
    \indent For $i=1,\dots,m-1,$ the contribution depends on the diagonal configuration. There are two cases: $\sigma_i = 1$ and $\sigma_i = 0.$ 

    Recall that if $\sigma_i=1$, the start and end squares of a valid loop contain the same type of diagonal. There are two possible configurations, namely $(\backslash,\backslash)$ and $(/,/).$ We consider the first configuration and determine the lengths of $\gamma_i$ and $\mu_i$ together with the sizes of their end blocks.

    Observe that a $2$-by-$n$ triangular strip lattice contains $n-1$ squares. For $i\ne m,$ both $\gamma_i$ and $\mu_i$ pass through the start square $m-i,$ the end square $n-m+i,$ and every square between them. We refer to the squares strictly between the start and end squares as the \emph{middle squares} (highlighted in pink in Figure \ref{fig:ex_n=10,i=2}). The number of the middle squares is $(n-m+i)-(m-i)-1=2i-1.$ 
    Every middle square contributes two edges to each valid loop, except for one of them, which contributes three edges. Hence the middle squares collectively contribute $2(2i-1)+1=4i-1$ edges to each valid loop. Since $\gamma_i$ does not intersect the diagonals of the start and end squares in this configuration, the start and end squares each contribute one additional edge. Meanwhile, since $\mu_i$ intersects the diagonals of the start and end squares, the start and end squares each contribute two additional edges. From this paragraph, we obtain
    \[\text{length}(\gamma_i)=4i+1,\]
    \[\text{length}(\mu_i)=4i+3.\]

    Next, observe that each end block (highlighted in yellow in Figure \ref{fig:ex_n=10,i=2}) contains vertices belonging to the first $m-i-1$ complete squares. Since $\gamma_i$ does not intersect the diagonals of the start and end squares, each of its end blocks contains an extra vertex. Therefore, the end blocks of $\gamma_i$ contain $2(m-i)+1$ vertices each, and the end blocks of $\mu_i$ contains $2(m-i)$ vertices each. Together, $\gamma_i$ and $\mu_i$ contribute
    \[(4i+1)T_{2(m-i)+1}^2+(4i+3)T_{2(m-i)}^2\]
    balanced spanning trees.
    
    \begin{figure}[H]
        \centering
        \begin{tikzpicture}[semithick,xscale=0.8,yscale=0.8]
            \fill[pink!60] (3,0) rectangle (6,1);
            \fill[yellow!30] (0,0)--(3,0)--(2,1)--(0,1)--cycle;
            \fill[yellow!30] (6,1)--(7,0)--(9,0)--(9,1)--cycle;
            \draw[] grid(9,1);
            \draw (2,1)--(3,0) (6,1)--(7,0);
            \draw (3,0)--(4,1)--(5,0)--(6,1);
            \draw[red, very thick] (2.75,1.3)--(2.75,0.75)--(3.25,0.75)--(3.75,0.25)--(4.25,0.25)--(4.75,0.75)--(5.25,0.75)--(5.75,0.25)--(6.25,0.25)--(6.25,-0.3);
            \fill[blue] (2.75,0.75) circle(1.1pt);
            \fill[blue] (3.25,0.75) circle(1.1pt);
            \fill[blue] (3.75,0.25) circle(1.1pt);
            \fill[blue] (4.25,0.25) circle(1.1pt);
            \fill[blue] (4.75,0.75) circle(1.1pt);
            \fill[blue] (5.25,0.75) circle(1.1pt);
            \fill[blue] (5.75,0.25) circle(1.1pt);
            \fill[blue] (6.25,0.25) circle(1.1pt);
            \node[red] at (2.75,1.6) {$\gamma_i$};

            nodes
            \fill (0,0) circle (1.5pt);
            \fill (1,0) circle (1.5pt);
            \fill (2,0) circle (1.5pt);
            \fill (3,0) circle (1.5pt);
            \fill (4,0) circle (1.5pt);
            \fill (5,0) circle (1.5pt);
            \fill (6,0) circle (1.5pt);
            \fill (7,0) circle (1.5pt);
            \fill (8,0) circle (1.5pt);
            \fill (9,0) circle (1.5pt);
            \fill (0,1) circle (1.5pt);
            \fill (1,1) circle (1.5pt);
            \fill (2,1) circle (1.5pt);
            \fill (3,1) circle (1.5pt);
            \fill (4,1) circle (1.5pt);
            \fill (5,1) circle (1.5pt);
            \fill (6,1) circle (1.5pt);
            \fill (7,1) circle (1.5pt);
            \fill (8,1) circle (1.5pt);
            \fill (9,1) circle (1.5pt);
        \end{tikzpicture}
        \\\vspace{0cm}
        \begin{tikzpicture}[semithick,xscale=0.8,yscale=0.8]
            \fill[pink!60] (3,0) rectangle (6,1);
            \fill[yellow!30] (0,0)--(2,0)--(2,1)--(0,1)--cycle;
            \fill[yellow!30] (7,1)--(7,0)--(9,0)--(9,1)--cycle;
            \draw[] grid(9,1);
            \draw (2,1)--(3,0) (6,1)--(7,0);
            \draw (3,0)--(4,1)--(5,0)--(6,1);
            \draw[red, very thick] (2.25,-0.3)--(2.25,0.25)--(2.75,0.75)--(3.25,0.75)--(3.75,0.25)--(4.25,0.25)--(4.75,0.75)--(5.25,0.75)--(5.75,0.25)--(6.25,0.25)--(6.75,0.75)--(6.75,1.3);
            \fill[blue] (2.25,0.25) circle(1.1pt);
            \fill[blue] (2.75,0.75) circle(1.1pt);
            \fill[blue] (3.25,0.75) circle(1.1pt);
            \fill[blue] (3.75,0.25) circle(1.1pt);
            \fill[blue] (4.25,0.25) circle(1.1pt);
            \fill[blue] (4.75,0.75) circle(1.1pt);
            \fill[blue] (5.25,0.75) circle(1.1pt);
            \fill[blue] (5.75,0.25) circle(1.1pt);
            \fill[blue] (6.25,0.25) circle(1.1pt);
            \fill[blue] (6.75,0.75) circle(1.1pt);
            \node[red] at (6.75,1.6) {$\mu_i$};

            nodes
            \fill (0,0) circle (1.5pt);
            \fill (1,0) circle (1.5pt);
            \fill (2,0) circle (1.5pt);
            \fill (3,0) circle (1.5pt);
            \fill (4,0) circle (1.5pt);
            \fill (5,0) circle (1.5pt);
            \fill (6,0) circle (1.5pt);
            \fill (7,0) circle (1.5pt);
            \fill (8,0) circle (1.5pt);
            \fill (9,0) circle (1.5pt);
            \fill (0,1) circle (1.5pt);
            \fill (1,1) circle (1.5pt);
            \fill (2,1) circle (1.5pt);
            \fill (3,1) circle (1.5pt);
            \fill (4,1) circle (1.5pt);
            \fill (5,1) circle (1.5pt);
            \fill (6,1) circle (1.5pt);
            \fill (7,1) circle (1.5pt);
            \fill (8,1) circle (1.5pt);
            \fill (9,1) circle (1.5pt);
        \end{tikzpicture}
        \caption{Example for $i=2$ in a $2$-by-$10$ triangular strip lattice ($\sigma_i=1$)}
        \label{fig:ex_n=10,i=2}
    \end{figure}
    
    In the other configuration $(/,/),$ the roles of $\gamma_i$ and $\mu_i$ are simply reversed: $\gamma_i$ has length $4i+3$ and end blocks of size $2(m-i),$ while $\mu_i$ has length $4i+3$ and end blocks of size $2(m-i).$ Hence, both configurations contribute the same number of balanced spanning trees. 
    
    For $\sigma_i=0,$ the possible configurations are $(/,\backslash)$ and $(\backslash,/).$ In either configuration, $\gamma_i$ and $\mu_i$ each intersects only one of the diagonals of the start and end cells. (See Figure \ref{fig:ex_n=10,i=2,sigma=0}.) Therefore,

    \[\text{length}(\gamma_i)=\text{length}(\mu_i)=4i+2.\]
    Furthermore, both $\gamma_i$ and $\mu_i$ have end blocks containing $2(m-i)+1$ and $2(m-i)$ vertices. Therefore, $\sigma_i=0$ contributes 
    \[(8i+4)T_{2(m-i)+1}T_{2(m-i)}\]
    balanced spanning trees.
        \begin{figure}[H]
        \centering
        \begin{tikzpicture}[semithick,xscale=0.8,yscale=0.8]
            \fill[yellow!30] (0,0)--(3,0)--(2,1)--(0,1)--cycle;
            \fill[yellow!30] (7,1)--(7,0)--(9,0)--(9,1)--cycle;
            \draw[] grid(9,1);
            \draw (2,1)--(3,0) (6,0)--(7,1);
            \draw (3,0)--(4,1)--(5,0)--(6,1);
            \draw[red, very thick] (2.75,1.3)--(2.75,0.75)--(3.25,0.75)--(3.75,0.25)--(4.25,0.25)--(4.75,0.75)--(5.25,0.75)--(5.75,0.25)--(6.25,0.75)--(6.75,0.25)--(6.75,-0.3);
            \fill[blue] (2.75,0.75) circle(1.1pt);
            \fill[blue] (3.25,0.75) circle(1.1pt);
            \fill[blue] (3.75,0.25) circle(1.1pt);
            \fill[blue] (4.25,0.25) circle(1.1pt);
            \fill[blue] (4.75,0.75) circle(1.1pt);
            \fill[blue] (5.25,0.75) circle(1.1pt);
            \fill[blue] (5.75,0.25) circle(1.1pt);
            \fill[blue] (6.25,0.75) circle(1.1pt);
            \fill[blue] (6.75,0.25) circle(1.1pt);
            \node[red] at (2.75,1.6) {$\gamma_i$};

            nodes
            \fill (0,0) circle (1.5pt);
            \fill (1,0) circle (1.5pt);
            \fill (2,0) circle (1.5pt);
            \fill (3,0) circle (1.5pt);
            \fill (4,0) circle (1.5pt);
            \fill (5,0) circle (1.5pt);
            \fill (6,0) circle (1.5pt);
            \fill (7,0) circle (1.5pt);
            \fill (8,0) circle (1.5pt);
            \fill (9,0) circle (1.5pt);
            \fill (0,1) circle (1.5pt);
            \fill (1,1) circle (1.5pt);
            \fill (2,1) circle (1.5pt);
            \fill (3,1) circle (1.5pt);
            \fill (4,1) circle (1.5pt);
            \fill (5,1) circle (1.5pt);
            \fill (6,1) circle (1.5pt);
            \fill (7,1) circle (1.5pt);
            \fill (8,1) circle (1.5pt);
            \fill (9,1) circle (1.5pt);
        \end{tikzpicture}
        \\\vspace{0cm}
        \begin{tikzpicture}[semithick,xscale=0.8,yscale=0.8]
            \fill[yellow!30] (0,0)--(2,0)--(2,1)--(0,1)--cycle;
            \fill[yellow!30] (7,1)--(6,0)--(9,0)--(9,1)--cycle;
            \draw[] grid(9,1);
            \draw (2,1)--(3,0) (6,0)--(7,1);
            \draw (3,0)--(4,1)--(5,0)--(6,1);
            \draw[red, very thick] (2.25,-0.3)--(2.25,0.25)--(2.75,0.75)--(3.25,0.75)--(3.75,0.25)--(4.25,0.25)--(4.75,0.75)--(5.25,0.75)--(5.75,0.25)--(6.25,0.75)--(6.25,1.3);
            \fill[blue] (2.75,0.75) circle(1.1pt);
            \fill[blue] (3.25,0.75) circle(1.1pt);
            \fill[blue] (3.75,0.25) circle(1.1pt);
            \fill[blue] (4.25,0.25) circle(1.1pt);
            \fill[blue] (4.75,0.75) circle(1.1pt);
            \fill[blue] (5.25,0.75) circle(1.1pt);
            \fill[blue] (5.75,0.25) circle(1.1pt);
            \fill[blue] (6.25,0.75) circle(1.1pt);
            \node[red] at (6.25,1.6) {$\mu_i$};

            nodes
            \fill (0,0) circle (1.5pt);
            \fill (1,0) circle (1.5pt);
            \fill (2,0) circle (1.5pt);
            \fill (3,0) circle (1.5pt);
            \fill (4,0) circle (1.5pt);
            \fill (5,0) circle (1.5pt);
            \fill (6,0) circle (1.5pt);
            \fill (7,0) circle (1.5pt);
            \fill (8,0) circle (1.5pt);
            \fill (9,0) circle (1.5pt);
            \fill (0,1) circle (1.5pt);
            \fill (1,1) circle (1.5pt);
            \fill (2,1) circle (1.5pt);
            \fill (3,1) circle (1.5pt);
            \fill (4,1) circle (1.5pt);
            \fill (5,1) circle (1.5pt);
            \fill (6,1) circle (1.5pt);
            \fill (7,1) circle (1.5pt);
            \fill (8,1) circle (1.5pt);
            \fill (9,1) circle (1.5pt);
        \end{tikzpicture}
        \caption{Example for $i=2$ in a $2$-by-$10$ triangular strip lattice ($\sigma_i=0$)}
        \label{fig:ex_n=10,i=2,sigma=0}
    \end{figure}
    Summing over all indices completes the proof for the even case.
\end{proof}

\begin{proof}[Odd Case]
    The proof for the odd case is analogous to the even case. We first consider the symmetric partition $i=m.$ Similar to the even case, length$(\gamma_m)=2n-1,$ and on either side of $\gamma_m,$ there is only one possible spanning tree. (See Figure \ref{fig:ex_n=11,i=m}.) Therefore, $\gamma_m$ contributes $2n-1$ balanced spanning trees.
    \begin{figure}[H]
        \begin{tikzpicture}[semithick,xscale=0.8,yscale=0.8]
            \draw[.!60] grid(10,1);
            \draw[.!60] (0,1)--(1,0)--(2,1) (2,0)--(3,1) (3,0)--(4,1)--(5,0)--(6,1)--(7,0) (7,1)--(8,0) (8,1)--(9,0)--(10,1);
            \fill (0,0) circle (1.5pt);
            \fill (1,0) circle (1.5pt);
            \fill (2,0) circle (1.5pt);
            \fill (3,0) circle (1.5pt);
            \fill (4,0) circle (1.5pt);
            \fill (5,0) circle (1.5pt);
            \fill (6,0) circle (1.5pt);
            \fill (7,0) circle (1.5pt);
            \fill (8,0) circle (1.5pt);
            \fill (9,0) circle (1.5pt);
            \fill (10,0) circle (1.5pt);
            \fill (0,1) circle (1.5pt);
            \fill (1,1) circle (1.5pt);
            \fill (2,1) circle (1.5pt);
            \fill (3,1) circle (1.5pt);
            \fill (4,1) circle (1.5pt);
            \fill (5,1) circle (1.5pt);
            \fill (6,1) circle (1.5pt);
            \fill (7,1) circle (1.5pt);
            \fill (8,1) circle (1.5pt);
            \fill (9,1) circle (1.5pt);
            \fill (10,1) circle (1.5pt);
            
            \draw[red,thick] (-0.25,0.25)--(0.25,0.25)--(0.75,0.75)--(1.25,0.75)--(1.75,0.25)--(2.25,0.75)--(2.75,0.25)--(3.25,0.75)--(3.75,0.25)--(4.25,0.25)--(4.75,0.75)--(5.25,0.75)--(5.75,0.25)--(6.25,0.25)--(6.75,0.75)--(7.25,0.25)--(7.75,0.75)--(8.25,0.25)--(8.75,0.75)--(9.25,0.75)--(9.75,0.25)--(10.25,0.25);
            \fill[blue] (0.25,0.25) circle(1.1pt);
            \fill[blue] (0.75,0.75) circle(1.1pt);
            \fill[blue] (1.25,0.75) circle(1.1pt);
            \fill[blue] (1.75,0.25) circle(1.1pt);
            \fill[blue] (2.25,0.75) circle(1.1pt);
            \fill[blue] (2.75,0.25) circle(1.1pt);
            \fill[blue] (3.25,0.75) circle(1.1pt);
            \fill[blue] (3.75,0.25) circle(1.1pt);
            \fill[blue] (4.25,0.25) circle(1.1pt);
            \fill[blue] (4.75,0.75) circle(1.1pt);
            \fill[blue] (5.25,0.75) circle(1.1pt);
            \fill[blue] (5.75,0.25) circle(1.1pt);
            \fill[blue] (6.25,0.25) circle(1.1pt);
            \fill[blue] (6.75,0.75) circle(1.1pt);
            \fill[blue] (7.25,0.25) circle(1.1pt);
            \fill[blue] (7.75,0.75) circle(1.1pt);
            \fill[blue] (8.25,0.25) circle(1.1pt);
            \fill[blue] (8.75,0.75) circle(1.1pt);
            \fill[blue] (9.25,0.75) circle(1.1pt);
            \fill[blue] (9.75,0.25) circle(1.1pt);
            \node at (-1,0.5) {$i=m$};
        \end{tikzpicture}
        \caption{Example for $i=m$ in a $2$-by-$11$ triangular strip lattice}
        \label{fig:ex_n=11,i=m}
    \end{figure}
    Next, we consider the cases when $\sigma_i=1$ and $\sigma_i=0$ for non-symmetric indices $0,1,\dots,m-1.$ Here, we apply the exact same strategy used in the even case. The only difference is that the middle squares here contribute $4i+1$ edges to each loop. Adjusting for this discrepancy and treating $i=0$ as a non-symmetric partition, we achieve the formula for the odd case.

\end{proof}


\subsection{Maximizing and minimizing $S_{2n}$} We are also interested in figuring out the configurations for $\Gamma_{2n}$ so that $S_{2n}$ is maximized and minimized. Our strategy is to find a relationship between the parameter $\sigma_i$ and the number of spanning trees associated with $i$. In doing so, we make the following observation.
\begin{lemma}\label{lemma:regarding_sigmas}
    For every non-symmetric partition pair $i$ of $\Gamma_{2n},$ the number of balanced spanning trees contributed by $i$ is maximized when $\sigma_i = 1$ and minimized when $\sigma_i=0$. 
\end{lemma}

\begin{proof}
    For even $n$ and $1 \leq i \leq m-1,$ the contributions corresponding to $\sigma_i=1$ and $\sigma_i=0,$ are $(4i+1)T_{2(m-i)+1}^2+(4i+3)T_{2(m-i)}^2$ and $(8i+4)T_{2(m-i)+1}T_{2(m-i)},$ respectively. Subtracting the latter from the former, we obtain
    \begin{align*}&(4i+1)T_{2(m-i)+1}^2+(4i+3)T_{2(m-i)}^2-(8i+4)T_{2(m-i)+1}T_{2(m-i)}\\
    &=(4i+1)(T_{2(m-i)+1}-T_{2(m-i)})^2-2T_{2(m-i)}(T_{2(m-i)+1}-T_{2(m-i)}).
    \end{align*}
    Substituting the identity $T_{2(m-i)+1}=2T_{2(m-i)}+F_{2(m-i)}$ yields
    \begin{align*}&(4i+1)(T_{2(m-i)}+F_{2(m-i)})^2-2T_{2(m-i)}(T_{2(m-i)}+F_{2(m-i)})\\
    &=4(i-1)(T_{2(m-i)}+F_{2(m-i)})^2+3T_{2(m-i)}^2+8T_{2(m-i)}F_{2(m-i)}+5F_{2(m-i)}^2.
    \end{align*}
    Since $i \geq 1$ and $T_{2(m-i)},F_{2(m-i)} > 0,$ the expression above is strictly positive. Hence, 
    \[(4i+1)T_{2(m-i)+1}^2+(4i+3)T_{2(m-i)}^2 > (8i+4)T_{2(m-i)+1}T_{2(m-i)}.\] This completes the proof for the even case.

    The proof for odd $n$ is analogous. For $0 \leq i \leq m-1,$ subtracting the contribution corresponding to $\sigma_i=0$ from the contribution corresponding to $\sigma_i=1,$ we obtain
    \[(4i+3)T_{2(m-i)+1}^2+(4i+5)T_{2(m-i)}^2-(8i+8)T_{2(m-i)+1}T_{2(m-i)}.\]
    Substituting $T_{2(m-i)+1}=2T_{2(m-i)}+F_{2(m-i)}$ and following the same computation as in the even case yields
    \[4i(T_{2(m-i)}+F_{2(m-i)})^2 + T_{2(m-i)}^2 + 4T_{2(m-i)}F_{2(m-i)} + 3F_{2(m-i)}^2.\]
    This expression is also strictly positive. Hence, the contribution corresponding to $\sigma_i=1$ is greater than that corresponding to $\sigma_i=0.$
    \end{proof}


The corollary below immediately follows Lemma $\ref{lemma:regarding_sigmas}.$
\begin{corollary}\label{corollary:maximizeminimize}
    $S_{2n}$ is maximized when $\sigma_i=1$ for all $i$ and minimized when $\sigma_i=0$ for all $i.$ 
\end{corollary}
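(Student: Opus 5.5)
The corollary follows from the additive structure of the formula in Theorem \ref{thm:Snforheterogeneous} combined with Lemma \ref{lemma:regarding_sigmas}. I will write $S_{2n}$ as a constant plus a sum of per-index contributions, each depending on exactly one $\sigma_i$. Then I will show that these $\sigma_i$ can be chosen independently of one another.

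First, fix $n$ and $m=\lfloor n/2\rfloor$. The terms $2n-1$ and, for even $n$, $3T_{2m}^2$ do not depend on the diagonal configuration. The remaining sum runs over the non-symmetric indices: $i=1,\dots,m-1$ for even $n$ and $i=0,\dots,m-1$ for odd $n$. The $i$-th summand equals
\[C_i(\sigma_i)=\sigma_i\,P_i+(1-\sigma_i)\,Q_i,\]
where $P_i$ and $Q_i$ are the $\sigma_i=1$ and $\sigma_i=0$ expressions. Since $\sigma_i\in\{0,1\}$, each summand is either $P_i$ or $Q_i$. Lemma \ref{lemma:regarding_sigmas} gives $P_i>Q_i$ for every such $i$. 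Therefore, for every configuration,
\[2n-1+c_n+\sum_i Q_i\;\le\; S_{2n}\;\le\; 2n-1+c_n+\sum_i P_i,\]
where $c_n$ is $3T_{2m}^2$ or $0$. Equality holds on the right exactly when all $\sigma_i=1$, and on the left exactly when all $\sigma_i=0$.

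Second, I must check that both extreme choices are realized by actual lattices, so that they are attained maxima and minima and not just bounds. The parameter $\sigma_i$ compares the diagonal types of squares $m-i$ and $n-m+i$. As $i$ ranges over the non-symmetric indices, these pairs are disjoint and no square lies in two pairs. Indeed, $m-i$ is strictly decreasing in $i$, $n-m+i$ is strictly increasing, and $m-i<n-m+i$ always holds since $i\ge 1$ in the even case and $n-2m=1$ in the odd case. The diagonals of these squares can therefore be assigned freely.

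Concretely, all $\sigma_i=1$ is realized by the homogeneous lattice with every diagonal ``$\backslash$'', or more generally by any configuration symmetric under the reflection pairing square $j$ with square $n-j$. All $\sigma_i=0$ is realized by putting ``$\backslash$'' on squares $1,\dots,m-1$ and ``$/$'' on squares $n-m+1,\dots,n-1$, with an arbitrary diagonal on any unpaired middle square. The one point I expect to need care is this bookkeeping: confirming that the index ranges leave no square in two pairs and that the central square for $i=0$, even $n$, is treated as symmetric. With that confirmed, the corollary follows.
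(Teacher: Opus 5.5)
Your proof is correct and follows the paper's approach. The paper only says the corollary follows immediately from Lemma~\ref{lemma:regarding_sigmas}. Your termwise comparison of $P_i$ and $Q_i$, plus the check that the square pairs $(m-i,\,n-m+i)$ are disjoint so both extremes are realizable, supplies what that sentence leaves implicit.

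There is one bookkeeping slip in your explicit all-$\sigma_i=0$ example. For odd $n=2m+1$, putting ``$\backslash$'' on squares $1,\dots,m-1$ and ``$/$'' on squares $n-m+1,\dots,n-1$ leaves squares $m$ and $m+1$ unassigned. These are not unpaired middle squares: they form the $i=0$ pair. So you need ``$\backslash$'' on squares $1,\dots,m$ and ``$/$'' on squares $m+1,\dots,2m$; in general, one type on squares $j<n/2$ and the other on $j>n/2$. Your disjointness argument already guarantees such a choice exists, so only the example needs fixing.
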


\section{Calculating the proportion of balanced spanning trees} \label{sec:proportion}

To calculate the proportion of balanced spanning trees, we first figure out a closed-form formula for $T_{k}$, the number of spanning trees in a graph with $k$ vertices. We have 2 cases depending on the parity of $k$. 
\par 
If $k=2n$, using (\ref{eqn:recurrencerelationspanningtree})
and (\ref{eqn:initialvalues1}), we get the following: 
\begin{equation}\label{eqn:evencaseformula}
    T_{2n} = \beta r^n - \theta r^{-n}  \quad \text{where} \quad 
    \beta = \frac{3\sqrt{5} - 5}{10}, \quad \theta = \frac{3\sqrt{5} + 5}{10} \quad \text{and} \quad r = \frac{7 + 3\sqrt{5}}{2}.
\end{equation}

Similarly, $k=2n+1$, using (\ref{eqn:recurrencerelationspanningtreeodd})
and (\ref{eqn:initialvalues2}), we get the following:  

\begin{equation}\label{eqn:oddcaseformula}
    T_{2n + 1} = \alpha (r^n - r^{-n} ) \quad \text{where} \quad 
    \alpha = \frac{1}{\sqrt{5}} \quad \text{and} \quad r = \frac{7 + 3\sqrt{5}}{2}.
\end{equation}
We can unify both equations and obtain the following: 

\begin{equation}\label{eqn:unifiedeqn}
    T_k = \alpha (\phi^{2k - 2} - \phi^{-(2k - 2)}) \quad \text{where} \quad \alpha = \frac{1}{\sqrt{5}}, \quad \phi = \frac{\sqrt{5} + 1}{2} .
\end{equation}

\subsection{Finding the upper bound}

The main goal of this section is to prove the following: \begin{proposition}\label{prop:upperbound}
For any $n \geq 1$, if $\sigma_i = 1$ for all $i \in \{1, 2, \dots, m-1\}$, the following holds:

\[\lim_{n\to \infty}\frac{S_{2n}}{T_{2n}} = 0.668328.\]
\end{proposition}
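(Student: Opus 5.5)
The plan is to substitute the closed form \eqref{eqn:unifiedeqn} into the formula of Theorem~\ref{thm:Snforheterogeneous} with every $\sigma_i=1$, divide by $T_{2n}$, and pass to the limit term by term. This gives an explicit series, which is then summed in closed form. I would treat $n=2m$ and $n=2m+1$ separately and check that both give the same constant $0.668328$.

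\emph{Even case, $n=2m$.} Here $T_{2n}=\alpha(\phi^{8m-2}-\phi^{-(8m-2)})$. From \eqref{eqn:unifiedeqn} we have $T_{2k+1}^2=\alpha^2\phi^{8k}(1+O(\phi^{-8k}))$ and $T_{2k}^2=\alpha^2\phi^{8k-4}(1+O(\phi^{-8k}))$. Put $k=m-i$ and divide. For each fixed $i$,
\[
\frac{(4i+1)T_{2(m-i)+1}^2+(4i+3)T_{2(m-i)}^2}{T_{2n}}\longrightarrow \alpha\bigl[(4i+1)\phi^{2-8i}+(4i+3)\phi^{-2-8i}\bigr].
\]
By the same computation, $3T_{2m}^2/T_{2n}\to 3\alpha\phi^{-2}$. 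The term $(2n-1)/T_{2n}$ tends to $0$, since $T_{2n}$ grows exponentially.

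\emph{Odd case, $n=2m+1$.} Here $T_{2n}\sim\alpha\phi^{8m+2}$. The $i$-th term, for $i\ge 0$, tends to
\[
\alpha\bigl[(4i+3)\phi^{-2-8i}+(4i+5)\phi^{-6-8i}\bigr],
\]
and again $(2n-1)/T_{2n}\to 0$.

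\emph{Interchanging limit and sum.} This is the only step needing genuine care, because the number of summands grows with $m$. I would justify it by dominated convergence for series, that is, Tannery's theorem. Since $T_k\le \alpha\phi^{2k-2}$, and $T_{2n}\ge \tfrac12\alpha\phi^{4n-2}$ for $n\ge1$, the ratio of the $i$-th term to $T_{2n}$ is bounded, uniformly in $m$, by $C(4i+5)\phi^{-8i}$. This bound is summable in $i$. So the limit of $S_{2n}/T_{2n}$ is the sum of the termwise limits.

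\emph{Summing the series.} Set $x=\phi^{-8}$. The limits become finite combinations of $\sum_{i\ge0}x^i=1/(1-x)$ and $\sum_{i\ge0}ix^i=x/(1-x)^2$, multiplied by powers of $\phi$. In the even case the sum over $i$ starts at $1$ and there is the extra $3\alpha\phi^{-2}$ term; in the odd case the sum starts at $0$. I would simplify each expression using $\phi^2=\phi+1$, i.e.\ working in $\mathbb{Q}(\sqrt5)$, to get exact closed forms. I would then check that the two agree and that their common value is approximately $0.668328$.

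As a sanity check, the even-case expression begins
\[
\alpha\bigl(3\phi^{-2}+5\phi^{-6}+7\phi^{-10}+\cdots\bigr)\approx 0.4472\,(1.146+0.279+0.058+\cdots)\approx 0.668,
\]
which is consistent with the stated value.

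I expect the main obstacle to be bookkeeping rather than anything conceptual. The exponents of $\phi$ in the two parity cases must line up; the odd case is shifted by $\phi^{\pm4}$ relative to the even one. The stated agreement of the two limits depends on getting these shifts exactly right, so I would verify the closed forms numerically against the exact value $S_{20}/T_{20}$ from Example~\ref{example:balancedspanningtrees}.
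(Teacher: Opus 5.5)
Your proposal is correct and follows essentially the paper's route: substitute \eqref{eqn:unifiedeqn}, take termwise limits, justify the interchange by dominated convergence, and sum an arithmetic--geometric series. The differences are cosmetic. The paper first reindexes both parities into the single sum $S_{2n}=(2n-1)+\sum_{i=1}^{n-1}(2i+1)T_{n+1-i}^2$, so the agreement you plan to check is automatic (termwise limit $\alpha\phi^{2-4i}$, total $(10+3\sqrt5)/25$), and it dominates via $T_{a+b+c}>9T_aT_bT_c$ instead of your more direct closed-form bounds; also, Example~\ref{example:balancedspanningtrees} has $\sigma_2=\sigma_3=0$, so $S_{20}/T_{20}$ from it is not a valid numerical test of this all-$\sigma_i=1$ limit.
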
.

We deal with the case where $\sigma_i$ = $1$ for all $i$ $\in$ $\{1, 2, 3, \dots, m - 1\}$. Recall by Corollary~\ref{corollary:maximizeminimize} that this gives the maximum $S_{2n}$. By \Cref{thm:Snforheterogeneous}, in our case, if $n = 2m$, then, \[S_{2n} = (2n-1) + 3T_{2m}^2 +\sum_{i=1}^{m-1}((4i+1)T_{2(m-i)+1}^2+(4i+3)T_{2(m-i)}^2).    
\]
Observe that, 

\begin{align*} \sum_{i=1}^{m-1}((4i+1)T_{2(m-i)+1}^2+(4i+3)T_{2(m-i)}^2) &= \sum_{i = 1}^{m - 1}(4i+1)T_{2(m-i)+1}^2 + \sum_{i = 1}^{m - 1}(4i+3)T_{2(m-i)}^2\\ 
&= \sum_{j = 2; j \text{ is even}}^{2(m - 1)}(2j+1)T_{n+1-j}^2 + \sum_{j = 3; j \text{ is odd}}^{2m - 1}(2j+1)T_{n+1-j}^2 \\
&= \sum_{i = 2}^{n - 1}(2i + 1)T^2_{n + 1 - i}.
\end{align*}

And if $n = 2m + 1$ is odd, then, 

\[S_{2n}= (2n-1)+ \sum_{i=0}^{m-1}((4i+3)T_{2(m-i)+1}^2+(4i+5)T_{2(m-i)}^2).\]

Observe that, 

\begin{align*} \sum_{i=0}^{m-1}((4i+3)T_{2(m-i)+1}^2+(4i+5)T_{2(m-i)}^2) &= \sum_{i=0}^{m-1}(4i+3)T_{2(m-i)+1}^2 + \sum_{i=0}^{m-1}(4i+5)T_{2(m-i)}^2\\ 
&= \sum_{j = 2; j \text{ is even}}^{2m}(2j+1)T_{n+1-j}^2 + \sum_{j = 1; j \text{ is odd}}^{2m - 1}(2j+1)T_{n+1-j}^2 \\
&= \sum_{i = 1}^{n - 1}(2i + 1)T^2_{n + 1 - i}.
\end{align*}



Now, with some substitutions, we can unify both formulae to obtain the following: 

\[S_{2n} = (2n - 1) + \sum_{i = 1}^{n - 1} (2i + 1)T_{n + 1 - i}^2.\]

This equation applies to both even and odd $n$.

\noindent \textit{Proof of Proposition \ref{prop:upperbound}.}
    Observe that, for $i \in \{1,2,\dots,n\} $, 
    \begin{equation} \label{eqn:simplifyfrac}
    \begin{aligned}
    \frac{T_{n + 1 - i}^2}{T_{2n}} 
    & \stackrel{(\ref{eqn:unifiedeqn})}{=} \frac{\alpha^2(\phi^{4n - 4i} + \phi^{-(4n - 4i)} - 2)}{\alpha(\phi^{4n - 2} - \phi^{-(4n - 2)})} \\
    & = \alpha \frac{\phi^{-4i} + \phi^{-8n + 4i} - 2\phi^{-4n}}{\phi^{-2} - \phi^{-8n + 2}}.
    \end{aligned}
    \end{equation}
Also, note the following inequality: 

\begin{equation}\label{eqn:inequalityfirst}
    T_{a + b} > 3T_a \cdot T_b.
\end{equation}
This can be shown algebraically by using (\ref{eqn:unifiedeqn}) or by noting that, if we consider two spanning trees $\mathcal{T}_a$ and $\mathcal{T}_b$ and we align them to form a forest,  there are 3 ways to add an edge to get a spanning tree $\mathcal{T}_{a + b}$. Furthermore, there are more ways of getting a spanning tree $\mathcal{T}_{a + b}$ without joining two trees. This works whenever one of $a$ or $b$ is even when we place the spanning tree with even vertices on the left. See \Cref{fig:ineqexample1} for an example.


 \begin{figure}[H]
        \centering
        \begin{tikzpicture}[scale=1, line join=round, line cap=round]
    
    \definecolor{gridgray}{RGB}{235, 235, 235}
    \definecolor{primalred}{RGB}{234, 67, 72}
    \definecolor{edgeblue}{RGB}{66, 133, 244}

    \begin{scope}[yshift=1.8cm]
        
        \foreach \y in {0,1} {
            \draw[gridgray, line width=1.2pt] (0,\y) -- (5,\y);
        }
        \foreach \x in {0,...,5} {
            \draw[gridgray, line width=1.2pt] (\x,0) -- (\x,1);
        }
        
        \draw[gridgray, line width=1.2pt] (0,0) -- (1,1);
        \draw[gridgray, line width=1.2pt] (1,0) -- (2,1);
        \draw[gridgray, line width=1.2pt] (2,1) -- (3,0);
        \draw[gridgray, line width=1.2pt] (3,0) -- (4,1);
        \draw[gridgray, line width=1.2pt] (4,1) -- (5,0);

        \draw[black, line width=1.5pt] (0,1) -- (1,1);
        \draw[black, line width=1.5pt] (0,0) -- (1,0);
        \draw[black, line width=1.5pt] (1,0) -- (1,1);
        \draw[black, line width=1.5pt] (1,0) -- (2,1);
        \draw[black, line width=1.5pt] (2,0) -- (2,1);
        
        \draw[black, line width=1.5pt] (3,1) -- (4,1);
        \draw[black, line width=1.5pt] (3,0) -- (4,0);
        \draw[black, line width=1.5pt] (3,0) -- (4,1);
        
        \draw[black, line width=1.5pt] (4,1) -- (5,0);
        \draw[black, line width=1.5pt] (5,0) -- (5,1);

        \draw[edgeblue, line width=1.5pt] (2,1) -- (3,1);
        \draw[edgeblue, line width=1.5pt] (2,1) -- (3,0);
        \draw[edgeblue, line width=1.5pt] (2,0) -- (3,0);

        \foreach \x in {0,...,5} {
            \foreach \y in {0,1} {
                \fill[primalred] (\x, \y) circle (2.5pt);
            }
        }
    \end{scope}

    \begin{scope}[yshift=0cm]
        
        \foreach \y in {0,1} {
            \draw[gridgray, line width=1.2pt] (0,\y) -- (5,\y);
        }
        \foreach \x in {0,...,5} {
            \draw[gridgray, line width=1.2pt] (\x,0) -- (\x,1);
        }
        
        \draw[gridgray, line width=1.2pt] (0,0) -- (1,1);
        \draw[gridgray, line width=1.2pt] (1,0) -- (2,1);
        \draw[gridgray, line width=1.2pt] (2,1) -- (3,0);
        \draw[gridgray, line width=1.2pt] (3,0) -- (4,1);
        \draw[gridgray, line width=1.2pt] (4,1) -- (5,0);

        \draw[black, line width=1.5pt] (0,1) -- (1,1);
        \draw[black, line width=1.5pt] (0,0) -- (1,0);
        \draw[black, line width=1.5pt] (1,0) -- (1,1);
        \draw[black, line width=1.5pt] (1,0) -- (2,1);
        
        \draw[black, line width=1.5pt] (2,1) -- (3,1);
        \draw[black, line width=1.5pt] (2,0) -- (3,0);
        
        \draw[green, line width=1.5pt] (3,1) -- (4,1);
        \draw[black, line width=1.5pt] (3,0) -- (4,0);
        \draw[black, line width=1.5pt] (3,0) -- (4,1);
        
        \draw[black, line width=1.5pt] (4,1) -- (5,0);
        \draw[black, line width=1.5pt] (5,0) -- (5,1);

        \foreach \x in {0,...,5} {
            \foreach \y in {0,1} {
                \fill[primalred] (\x, \y) circle (2.5pt);
            }
        }
    \end{scope}

\end{tikzpicture}
        \caption{Top: Two spanning trees in $\Gamma_6$ (with black edges) that can be connected in 3 ways (the blue edges) to form a spanning tree in $\Gamma_{12}$. Bottom: A way to get a spanning tree in $\Gamma_{12}$ without joining two trees (the edges in the left 6 vertices don't form a spanning tree for $\Gamma_6$).}
        \label{fig:ineqexample1}
    \end{figure}
When both $a$ and $b$ are odd, we can align them to create a graph in $\Gamma_{a + b}$. We still get three ways to create a spanning tree in $\Gamma_{a + b}$. Furthermore, there are more ways of getting a spanning tree $\mathcal{T}_{a + b}$ without joining two trees in $\mathcal{T}_a$ and $\mathcal{T}_b$. See \Cref{fig:ineqexample2} for an example.

 \begin{figure}[H]
        \centering
        \begin{tikzpicture}[scale=1, line join=round, line cap=round]
    
    \definecolor{gridgray}{RGB}{235, 235, 235}
    \definecolor{primalred}{RGB}{234, 67, 72}
    \definecolor{edgeblue}{RGB}{66, 133, 244}

    \begin{scope}[yshift=1.8cm]
        
        \foreach \y in {0,1} {
            \draw[gridgray, line width=1.2pt] (0,\y) -- (5,\y);
        }
        \foreach \x in {0,...,5} {
            \draw[gridgray, line width=1.2pt] (\x,0) -- (\x,1);
        }
        
        \draw[gridgray, line width=1.2pt] (0,0) -- (1,1);
        \draw[gridgray, line width=1.2pt] (2,0) -- (3,1);

        \draw[black, line width=1.5pt] (0,1) -- (1,1);
        \draw[black, line width=1.5pt] (0,0) -- (1,0);
        \draw[black, line width=1.5pt] (1,0) -- (1,1);
        
        \draw[black, line width=1.5pt] (1,0) -- (2,1);
        \draw[black, line width=1.5pt] (2,0) -- (2,1);
        
        \draw[black, line width=1.5pt] (2,1) -- (3,1);
        
        \draw[black, line width=1.5pt] (3,0) -- (4,1);
        \draw[black, line width=1.5pt] (3,0) -- (4,0);
        
        \draw[black, line width=1.5pt] (4,1) -- (5,0);
        \draw[black, line width=1.5pt] (5,0) -- (5,1);

        \draw[edgeblue, line width=1.5pt] (2,0) -- (3,0);
        \draw[edgeblue, line width=1.5pt] (3,0) -- (3,1);
        \draw[edgeblue, line width=1.5pt] (3,1) -- (4,1);

        \foreach \x in {0,...,5} {
            \foreach \y in {0,1} {
                \fill[primalred] (\x, \y) circle (2.5pt);
            }
        }
    \end{scope}

    \begin{scope}[yshift=0cm]
        
        \foreach \y in {0,1} {
            \draw[gridgray, line width=1.2pt] (0,\y) -- (5,\y);
        }
        \foreach \x in {0,...,5} {
            \draw[gridgray, line width=1.2pt] (\x,0) -- (\x,1);
        }
        
        \foreach \x in {0,...,4} {
            \draw[gridgray, line width=1.2pt] (\x,0) -- (\x+1,1);
        }

        \draw[black, line width=1.5pt] (0,1) -- (5,1);
        \draw[black, line width=1.5pt] (0,0) -- (5,0);
        \draw[black, line width=1.5pt] (1,0) -- (1,1);

        \foreach \x in {0,...,5} {
            \foreach \y in {0,1} {
                \fill[primalred] (\x, \y) circle (2.5pt);
            }
        }
    \end{scope}

\end{tikzpicture}
        \caption{Top: Two spanning trees in $\Gamma_7$ on the left and $\Gamma_5$ on the right (with black edges) that can be connected in 3 ways (the blue edges) to form a spanning tree in $\Gamma_{12}$. Bottom: A way to get a spanning tree in $\Gamma_{12}$ without joining two trees in $\Gamma_7$ and $\Gamma_5$ (the edges in the right 5 vertices don't form a spanning tree for $\Gamma_5$).}
        \label{fig:ineqexample2}
    \end{figure}
So in any case, this inequality holds. We can use this inequality to get an inequality that will be more useful to us: 

\begin{equation}\label{eqn:inequality}
    T_{a + b + c} > 9T_a \cdot T_b \cdot T_c
\end{equation}

Using this inequality, we get

\begin{equation}\label{eqn:inequality1}
   T_{2n} = T_{(n + 1 - i) + (n + 1 - i) + (2i - 2)} > 9T^2_{n + 1 - i} T_{2i - 2} .
\end{equation}
Now, for $i \geq 4$, we get the following,

\begin{align*}\frac{(2i + 1)T_{n + 1 - i}^2}{T_{2n}} &\stackrel{(\ref{eqn:inequality1})}{<} \frac{2i + 1}{9T_{2i - 2}}\\
&\stackrel{(\ref{eqn:unifiedeqn})}{=} \frac{2i + 1}{9\alpha (\phi^{4i - 6} - \phi^{-(4i - 6)})}\\
&< \frac{\sqrt{5}}{9}\frac{2i + 1}{(\frac{3}{2})^{4i - 6}} = \frac{\sqrt{5}}{9}(2i + 1)\bigg(\frac{2}{3}\bigg)^{4i - 6}.
\end{align*}
With this fact, we can use the dominated convergence theorem to establish the following:

\begin{equation}\label{eqn:dominatedconv}
    \lim_{n \to \infty} \sum_{i = 1}^{n-1} \frac{(2i + 1)T_{n+1-i}^2}{T_{2n}} =  \sum_{i = 1}^{\infty} \lim_{n \to \infty} \frac{(2i + 1)T_{n+1-i}^2}{T_{2n}} = \sum_{i = 1}^{\infty} (2i + 1) \lim_{n \to \infty} \frac{T_{n+1-i}^2}{T_{2n}}.
\end{equation}

\noindent where $T_k$ = 0 if $k < 0$. Now, observe that, 

\begin{align}\label{eqn:limit}
\lim_{n \to \infty} \frac{T_{n+1-i}^2}{T_{2n}} \stackrel{(\ref{eqn:simplifyfrac})}{=} \lim_{n \to \infty} \alpha \frac{\phi^{-4i} + \phi^{-8n + 4i} - 2\phi^{-4n}}{\phi^{-2} - \phi^{-8n + 2}} = \alpha \frac{\phi^{-4i}}{\phi^{-2}} = \alpha \frac{\phi^2}{\phi ^ {4i}}.
\end{align}

Now, we get the following 
\begin{align}\label{eqn:homogenval}
\lim_{n \to \infty} \sum_{i = 1}^{n - 1} \frac{(2i + 1)T_{n+1-i}^2}{T_{2n}} &\overset{(\ref{eqn:dominatedconv}), (\ref{eqn:limit})}{=} \alpha \phi^2 \sum_{i = 1}^{\infty} \frac{2i + 1}{\phi^{4i}} \\ \nonumber
&= \frac{\alpha \phi^2(3 \phi^{-4} - \phi^{-8})}{(1 - \phi^{-4})^2} \\ \nonumber
&= \frac{10 + 3 \sqrt{5}}{25} = 0.668328.
\end{align}

Finally, observing that
\[\lim_{n \to \infty} \frac{2n - 1}{T_{2n}} = 0,\]
we get
\[\lim_{n \to \infty} \frac{S_{2n}}{T_{2n}} =  \lim_{n \to \infty} \frac{2n - 1}{T_{2n}} + \lim_{n \to \infty} \sum_{i = 1}^{n - 1} \frac{(2i + 1)T_{n+1-i}^2}{T_{2n}} = 0.668328.\]
This completes our proof.

\qed

\begin{remark}
It is worth noting that if $\Gamma_{2n}$ has $\sigma_i = 1$ for all $i$, this limit already agrees with $\dfrac{S_{2n}}{T_{2n}}$ in three decimal places for $n \ge 6$. So, for a fixed $n$, we can approximate the proportion of balanced spanning trees {in triangular lattices where $\sigma_i = 1$ for all $i$} very accurately with this limit.
\end{remark}

\subsection{Finding the lower bound}

The main goal of this section is to prove the following two propositions: 

\begin{proposition}\label{prop:lowerboundeven}
For any even $n \geq 1$, if $\sigma_i = 1$ for all $i \in \{1, 2, \dots, m-1\}$, the following holds:

\[\lim_{n\to \infty}\frac{S_{2n}}{T_{2n}} = 0.63087.\]

\end{proposition}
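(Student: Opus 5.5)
The plan is to compute the limit directly from the even-case formula of Theorem~\ref{thm:Snforheterogeneous}, setting $\sigma_i=1$ for every $i\in\{1,\dots,m-1\}$ as the hypothesis requires, and dividing term by term by $T_{2n}=T_{4m}$. With $\sigma_i=1$ the formula reads
\[
S_{2n}=(2n-1)+3T_{2m}^2+\sum_{i=1}^{m-1}\Big((4i+1)T_{2(m-i)+1}^2+(4i+3)T_{2(m-i)}^2\Big).
\]
The proof of Proposition~\ref{prop:upperbound} already rewrites this as $(2n-1)+\sum_{i=1}^{n-1}(2i+1)T_{n+1-i}^2$; the $i=1$ term there is exactly $3T_{2m}^2$.

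Next I will handle the limit with the same domination argument. Inequality (\ref{eqn:inequality1}) gives $(2i+1)T_{n+1-i}^2/T_{2n}<\frac{\sqrt5}{9}(2i+1)(2/3)^{4i-6}$ for $i\ge 4$, which is summable, so dominated convergence justifies the interchange in (\ref{eqn:dominatedconv}). For each fixed $i$, (\ref{eqn:limit}) gives the limit $\alpha\phi^2\phi^{-4i}$. The term $(2n-1)/T_{2n}$ tends to $0$.

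Summing the series as in (\ref{eqn:homogenval}) gives
\[
\alpha\phi^2\sum_{i\ge1}(2i+1)\phi^{-4i}=\frac{10+3\sqrt5}{25}.
\]
As a sanity check, the leading pieces are $3\alpha\phi^{-2}\approx 0.5124$ from $i=0$, about $0.150$ from $i=1$, and about $0.006$ from $i=2$.

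The main obstacle is that this limit is $0.668328$, not the stated $0.63087$. Under the hypothesis as literally worded ($\sigma_i=1$ for all $i$), the statement coincides with Proposition~\ref{prop:upperbound} and the argument above proves the value $0.668328$. So my first step would be to reconcile the constant with the hypothesis.

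The natural reading is that the lower bound is meant for the minimizing configuration of Corollary~\ref{corollary:maximizeminimize}, namely $\sigma_i=0$ for all $i$. The same machinery applies there: replace each summand by $(8i+4)T_{2(m-i)+1}T_{2(m-i)}/T_{4m}$, whose limit by (\ref{eqn:unifiedeqn}) is $(8i+4)\alpha\phi^{-8i-2}$. Add the $3\alpha\phi^{-2}$ term and evaluate the resulting geometric-type series.

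My rough estimate of that sum is about $0.558$. That also does not match $0.63087$, so the decisive step is to recompute this series exactly before asserting any constant. I would not claim the value $0.63087$ until that closed-form evaluation agrees with it.
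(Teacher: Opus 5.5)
Your reading of the hypothesis is right, and it agrees with the paper. The paper's proof of this proposition starts from $S_{2n}=2n-1+3T_{2m}^2+\sum_{i=1}^{m-1}(8i+4)T_{2(m-i)+1}T_{2(m-i)}$. That is the $\sigma_i=0$ (minimizing) configuration of Corollary~\ref{corollary:maximizeminimize}, so ``$\sigma_i=1$'' in the statement is a typo for ``$\sigma_i=0$''. Under the literal hypothesis the limit is indeed $\frac{10+3\sqrt5}{25}\approx 0.668328$. Your plan for the intended case (dominated convergence, termwise limits, summing the series) is exactly the paper's route. What is missing is the conclusion. You stop because your estimate $0.558$ disagrees with $0.63087$, but that disagreement comes from an error in your termwise limit, not from the constant.

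The limit of $T_{2(m-i)+1}T_{2(m-i)}/T_{4m}$ is $\alpha\phi^{-8i}$, not $\alpha\phi^{-8i-2}$. By (\ref{eqn:unifiedeqn}), $T_{2(m-i)+1}\sim\alpha\phi^{4m-4i}$, $T_{2(m-i)}\sim\alpha\phi^{4m-4i-2}$ and $T_{4m}\sim\alpha\phi^{8m-2}$. Numerically, $T_9T_8/T_{20}=987\cdot 377/39088169\approx 0.00952\approx\alpha\phi^{-8}$. The series is therefore $\alpha\sum_{i\ge 1}(8i+4)\phi^{-8i}=\alpha\bigl[\frac{8\phi^{-8}}{(1-\phi^{-8})^2}+\frac{4\phi^{-8}}{1-\phi^{-8}}\bigr]=\frac{210-82\sqrt5}{225}\approx 0.11841$. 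Adding $3\alpha\phi^{-2}=\frac{9\sqrt5-15}{10}\approx 0.51246$ gives the limit $\frac{241\sqrt5-255}{450}\approx 0.63087$, as stated. Your $0.558$ is exactly $0.51246+\phi^{-2}\cdot 0.11841$, which confirms the stray factor.

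You should also write out the domination for this sum rather than appeal to ``the same machinery''. The paper uses $T_{4m}>9\,T_{2(m-i)+1}T_{2(m-i)}T_{4i-1}$, an instance of (\ref{eqn:inequality}) since the three indices sum to $4m$. This bounds the $i$th term by $(8i+4)/(9T_{4i-1})$, which is summable. Alternatively, Lemma~\ref{lemma:regarding_sigmas} bounds each $\sigma_i=0$ summand by the corresponding $\sigma_i=1$ summand, which you have already dominated.
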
.

\begin{proposition}\label{prop:lowerboundodd}
For any odd $n \geq 1$, if $\sigma_i = 1$ for all $i \in \{1, 2, \dots, m-1\}$, the following holds:

\[\lim_{n\to \infty}\frac{S_{2n}}{T_{2n}} = 0.54493.\]

\end{proposition}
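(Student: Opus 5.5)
The plan is to specialize the odd case of Theorem~\ref{thm:Snforheterogeneous} to the configuration in the hypothesis, and then repeat the argument used for Proposition~\ref{prop:upperbound}: normalize by $T_{2n}$, pass the limit inside the sum by dominated convergence, and evaluate the resulting geometric-type series in closed form.

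\textbf{Step 1 (specialize the formula).} With $n=2m+1$, Theorem~\ref{thm:Snforheterogeneous} writes $S_{2n}$ as $2n-1$ plus a sum over $i=0,\dots,m-1$. Each summand is either the $\sigma_i=1$ term
\[(4i+3)T_{2(m-i)+1}^2+(4i+5)T_{2(m-i)}^2\]
or the $\sigma_i=0$ term
\[(8i+8)T_{2(m-i)+1}T_{2(m-i)}.\]
The hypothesis prescribes $\sigma_i$ for $i\ge1$. The index $i=0$ lies outside the hypothesis, so I will carry that term separately with its own value of $\sigma_0$.

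\textbf{Step 2 (termwise limits).} From (\ref{eqn:unifiedeqn}) we have $T_k=\alpha\phi^{2k-2}(1+o(1))$, hence $T_{2n}\sim\alpha\phi^{8m+2}$. With $k=m-i$ fixed relative to $n$, this gives
\[\frac{T_{2k+1}^2}{T_{2n}}\to\alpha\phi^{-8i-2},\qquad
\frac{T_{2k}^2}{T_{2n}}\to\alpha\phi^{-8i-6},\qquad
\frac{T_{2k+1}T_{2k}}{T_{2n}}\to\alpha\phi^{-8i-4}.\]
For domination I use (\ref{eqn:inequality}) in the form $T_{2n}>9T_aT_bT_c$, with the split $a=2k+1$, $b=2k$ or $2k+1$, and $c$ the remainder, so that $c\ge 4i$. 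This bounds every normalized summand by $C(i+1)\phi^{-8i}$, which is summable, exactly as in the proof of Proposition~\ref{prop:upperbound}. The term $(2n-1)/T_{2n}$ tends to $0$.

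\textbf{Step 3 (evaluate the series).} Dominated convergence now turns $\lim S_{2n}/T_{2n}$ into an explicit series in $q=\phi^{-8}$, built from $\sum q^i$ and $\sum iq^i$. I evaluate these as $q/(1-q)$ and $q/(1-q)^2$ and simplify using $\phi^2=\phi+1$ and $\alpha=1/\sqrt5$. The last task is to check that the resulting closed form agrees with $0.54493$.

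\textbf{Main obstacle.} I expect the difficulty to be this final reconciliation rather than the analysis in Step 2. If every $\sigma_i=1$ contribution is used, the series collapses to $\alpha\phi^2\sum_{j\ge1}(2j+1)\phi^{-4j}$ as in (\ref{eqn:homogenval}), which equals $0.668328$, not $0.54493$. By Lemma~\ref{lemma:regarding_sigmas}, the value $0.54493$ can only come from the pattern in which the $\sigma_i=0$ cross terms $(8i+8)\alpha\phi^{-8i-4}$ appear, which is the minimizing configuration of Corollary~\ref{corollary:maximizeminimize}. So in Step 3 I will compute both series explicitly. If the stated number is to be matched, it must come from the cross-term series $\alpha\sum_{i\ge0}(8i+8)\phi^{-8i-4}$. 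I will verify this numerically before finalizing, and I expect the check to show that the hypothesis as worded does not yield $0.54493$.
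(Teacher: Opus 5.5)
Your diagnosis and method match the paper. Its proof of this proposition actually takes $\sigma_i=0$ for every $i\in\{0,\dots,m-1\}$, writing $S_{2n}=2n-1+\sum_{i=0}^{m-1}(8i+8)T_{2(m-i)+1}T_{2(m-i)}$, dominating the terms via (\ref{eqn:inequality}) with the same split you use, and summing exactly your cross-term series to $8\alpha\phi^{-4}/(1-\phi^{-8})^2\approx 0.544933$; so the printed hypothesis $\sigma_i=1$ is a typo for the minimizing configuration of Corollary~\ref{corollary:maximizeminimize}, and your numerical check will confirm that no reading of the printed hypothesis works ($\sigma_0=1$ gives $0.668328$, while $\sigma_0=0$ gives approximately $0.5532$).
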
.

\begin{proof}[Proof for Proposition \ref{prop:lowerboundeven}]
    By \Cref{thm:Snforheterogeneous}, if $n = 2m$, we get, 
\[ S_{2n}= 2n-1+ 3T_{2m}^2 +\sum_{i=1}^{m-1}[(8i+4)T_{2(m-i)+1}T_{2(m-i)}] .\]

Observe that, we have, 

\begin{align*}
    \frac{(8i + 4)T_{2(m - i) + 1}T_{2(m - i)}}{T_{4m}}  &\stackrel{(\ref{eqn:inequality})}{<} \frac{8i + 4}{9T_{4i - 1}} \\
    &= \frac{8i + 4}{9\alpha(\phi^{8i - 3} - \phi^{-(8i - 3)})} \\
    &< \frac{8i + 4}{9\alpha(\frac{3}{2})^{8i - 3}} \\
    &= \frac{\sqrt{5}}{9}(8i + 4)\bigg(\frac{2}{3}\bigg)^{8i - 3}.
\end{align*}

With this fact, we can use the dominated convergence theorem to establish the following: 
\begin{equation}\label{eqn:dominatedconv2}
    \lim_{m \to \infty} \sum_{i = 1}^{m - 1} \frac{(8i + 4)T_{2(m - i) + 1}T_{2(m - i)}}{T_{4m}} = \sum_{i = 1}^{\infty} (8i + 4) \lim_{m \to \infty} \frac{T_{2(m - i) + 1}T_{2(m - i)}}{T_{4m}}.
\end{equation}
Observe that 

\[\lim_{m \to \infty} \frac{T_{2(m - i) + 1}T_{2(m - i)}}{T_{4m}} = \lim_{m \to \infty} \alpha\frac{\phi^{-8i - 2} - \phi^{-8m + 2} - \phi^{-8m - 2} + \phi^{-8(2m - i)+2}}{\phi^{-2} - \phi^{-(16m - 2)}} = \frac{\alpha}{\phi^{8i}}.\]

Now, with this, we get 
\begin{equation}\label{eqn:firstpart}
\begin{aligned}
    \lim_{m \to \infty} \sum_{i = 1}^{m - 1} \frac{(8i + 4)T_{2(m - i) + 1}T_{2(m - i)}}{T_{4m}} 
    &\stackrel{(\ref{eqn:dominatedconv2})}{=} \sum_{i = 1}^{\infty} \alpha\frac{8i + 4}{\phi^{8i}} \\ 
    &= \alpha\bigg[\frac{8\phi^{-8}}{(1 - \phi^{-8})^2} + \frac{4\phi^{-8}}{1 - \phi^{-8}}\bigg] \\ 
    &= 0.11841.
\end{aligned} 
\end{equation}
Now, also observe that,
\begin{equation}\label{eqn:secondpart}
    \lim_{m \to \infty} \frac{3T_{2m}^2}{T_{4m}} \stackrel{(\ref{eqn:unifiedeqn})}{=} \lim_{m \to \infty} 3 \alpha \frac{\phi^{-4} + \phi^{-(16m - 4)} - 2\phi^{-8m}}{\phi^{-2} - \phi^{-(16m - 2)}} = \frac{3\alpha}{\phi^2} = 0.51246.
\end{equation}

So, we obtain, 

\[\lim_{m \to \infty} \frac{S_{4m}}{T_{4m}} = \lim_{m \to \infty} \frac{4m - 1}{T_{4m}} + \lim_{m \to \infty}\frac{3T_{2m}^2}{T_{4m}} + \lim_{m \to \infty} \sum_{i = 1}^{m - 1} \frac{(8i + 4)T_{2(m - i) + 1}T_{2(m - i)}}{T_{4m}} \stackrel{(\ref{eqn:firstpart}), (\ref{eqn:secondpart})}{=} 0.63087.\]
\end{proof}

\begin{proof}[Proof for Proposition \ref{prop:lowerboundodd}]
By Theorem \ref{thm:Snforheterogeneous}, we establish that
\[\lim_{n \to \infty} \dfrac{S_{2n}}{T_{2n}}=\lim_{n \to \infty}\bigg(\dfrac{2n-1}{T_{2n}}+\sum_{i=0}^{m-1}\dfrac{(8i+8)T_{2(m-i)+1}T_{2(m-i)}}{T_{2n}}\bigg).\]
Again, since 
\[\lim_{n \to \infty}\dfrac{2n-1}{T_{2n}}=0,\]
we can eliminate the first term. From (\ref{eqn:inequality}), we get
\[9T_{2(m-i)+1}T_{2(m-i)}T_{4i+1} < T_{4m+2}=T_{2n},\] and consequently,
\[\dfrac{(8i+8)T_{2(m-i)+1}T_{2(m-i)}}{T_{2n}} < \dfrac{8i+8}{T_{4i+1}}.\]
Therefore, we can use the dominated convergence theorem to derive the following:
\[\lim_{n \to \infty} \dfrac{S_{2n}}{T_{2n}}=\sum_{i=0}^{\infty}(8i+8)\lim_{n \to \infty}\dfrac{T_{2(m-i)+1}T_{2(m-i)}}{T_{2n}}.\]

From \ref{eqn:unifiedeqn}, we can simplify the equation to

\begin{equation}\label{eqn:oddfinal}
    \lim_{n \to \infty} \dfrac{S_{2n}}{T_{2n}}=\sum_{i=0}^{\infty}(8i+8)\alpha \phi^{-8i-4}=\dfrac{8\alpha \phi^{-4}}{(1-\phi^{-8})^2}=0.544933.
\end{equation}

\end{proof}


\subsection{Random triangular strip lattices} Finally, we aim to determine the expected proportion of balanced spanning trees given a random configuration of the diagonals. 

First, we precisely describe our random model. Let $\Gamma_{2n}$ be a $2$-by-$n$ triangular strip lattice where the diagonal of each square face is chosen independently, such that it connects the top-left and bottom-right vertices with probability $p$, and the top-right and bottom-left vertices with probability $1-p$. As before, let $S_{2n}$ denote the number of balanced spanning trees and $T_{2n}$ denote the total number of spanning trees in $\Gamma_{2n}$.

\par

We now state a more general version of Main Theorem 2:

\begin{theorem}[Average Proportion for Triangular Strip Lattices]
\label{thm:generalized_expected_proportion}
Let $\Gamma_{2n}$ be a random graph chosen as described above. As $n \to \infty$, the expected asymptotic proportion of balanced spanning trees is given by:

Even Case ($n = 2m$):
\[
    \lim_{n \to \infty}\mathbb{E}\bigg[\frac{S_{2n}}{T_{2n}}\bigg] = \frac{9\sqrt{5} - 15}{10} + (2p^2 - 2p + 1)\frac{95 - 39\sqrt{5}}{50} + (2p - 2p^2)\frac{210 - 82\sqrt{5}}{225}.\]

Odd Case ($n = 2m + 1$):
\[
    \lim_{n \to \infty}\mathbb{E}\bigg[\frac{S_{2n}}{T_{2n}}\bigg] = (2p^2 - 2p + 1)\frac{10 + 3\sqrt{5}}{25} + (2p - 2p^2)\frac{68 + 28\sqrt{5}}{225}.
\]
\end{theorem}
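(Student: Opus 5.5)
Both limits follow from linearity of expectation applied to the formula of Theorem~\ref{thm:Snforheterogeneous}, combined with the limit computations already done for the upper and lower bounds. The key observation is that $T_{2n}$ does not depend on the diagonal configuration: by Theorem~\ref{thm:spanningtrees} it satisfies the same recurrence and initial values for every $\Gamma_{2n}$. So $T_{2n}$ is deterministic and
\[
\mathbb{E}\!\left[\frac{S_{2n}}{T_{2n}}\right]=\frac{\mathbb{E}[S_{2n}]}{T_{2n}}.
\]
Moreover, Theorem~\ref{thm:Snforheterogeneous} writes $S_{2n}$ as a deterministic term plus a sum over $i$ that is affine in each $\sigma_i$. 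All the randomness therefore enters through the expectations $\mathbb{E}[\sigma_i]$.

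\textbf{Step 1: compute $\mathbb{E}[\sigma_i]$.} For every non-symmetric index $i$ ($1\le i\le m-1$ when $n$ is even, $0\le i\le m-1$ when $n$ is odd), the start square $m-i$ and the end square $n-m+i$ are distinct. Their diagonals are therefore independent, and
\[
\mathbb{E}[\sigma_i]=p^2+(1-p)^2=2p^2-2p+1,\qquad \mathbb{E}[1-\sigma_i]=2p-2p^2.
\]

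\textbf{Step 2: split into two configuration-free sums.} Substituting into Theorem~\ref{thm:Snforheterogeneous} gives:
\begin{itemize}
\item in the even case,
\[
\frac{\mathbb{E}[S_{2n}]}{T_{2n}}=\frac{2n-1+3T_{2m}^2}{T_{2n}}+(2p^2-2p+1)A_n+(2p-2p^2)B_n,
\]
where $A_n$ is the all-$\sigma_i=1$ sum over $i\ge 1$ divided by $T_{2n}$, and $B_n$ is the all-$\sigma_i=0$ sum divided by $T_{2n}$;
\item in the odd case, the same expression without the $3T_{2m}^2$ term and with the sums running from $i=0$.
\end{itemize}
Neither $A_n$ nor $B_n$ depends on the random configuration, so it remains only to compute their limits.

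\textbf{Step 3: identify the limits.}
\begin{itemize}
\item The constant term. By (\ref{eqn:secondpart}), $3T_{2m}^2/T_{4m}\to 3\alpha\phi^{-2}$, which rationalizes to $\tfrac{9\sqrt5-15}{10}$. The term $(2n-1)/T_{2n}$ tends to $0$.
\item The even $\sigma_i=0$ sum. By (\ref{eqn:firstpart}), $B_n\to\alpha\sum_{i\ge1}(8i+4)\phi^{-8i}$, which I will evaluate in closed form as $\tfrac{210-82\sqrt5}{225}$.
\item The even $\sigma_i=1$ sum. In Proposition~\ref{prop:upperbound} this sum was merged with the $i=0$ term. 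Here I take the total value $\tfrac{10+3\sqrt5}{25}$ from (\ref{eqn:homogenval}) and subtract the $i=0$ contribution $\tfrac{9\sqrt5-15}{10}$, which leaves $\tfrac{95-39\sqrt5}{50}$.
\item The odd $\sigma_i=1$ sum. This is exactly (\ref{eqn:homogenval}), giving $\tfrac{10+3\sqrt5}{25}$.
\item The odd $\sigma_i=0$ sum. This is the series $\sum_{i\ge0}(8i+8)\alpha\phi^{-8i-4}$ from (\ref{eqn:oddfinal}).
\end{itemize}
All interchanges of limit and sum are justified by the same dominating bounds already used. These come from (\ref{eqn:inequality}) and (\ref{eqn:inequality1}) and are geometric in $i$ with ratio a power of $2/3$.

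\textbf{Main obstacle.} I expect the real work to be the exact evaluation of these series as numbers of the form $a+b\sqrt5$. The evaluation uses $\phi^{-4}=\tfrac{7-3\sqrt5}{2}$ and $\phi^{-8}=\tfrac{47-21\sqrt5}{2}$ together with the formulas for $\sum x^i$ and $\sum ix^i$.

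I am not confident about the odd $\sigma_i=0$ constant. A rough decimal check gives $\tfrac{68+28\sqrt5}{225}\approx 0.5805$, whereas (\ref{eqn:oddfinal}) reports $0.544933$. So I would first re-derive $\lim_{n\to\infty} T_{2(m-i)+1}T_{2(m-i)}/T_{2n}$ directly from (\ref{eqn:unifiedeqn}), keeping careful track of the exponent offset for $n=2m+1$. I would then reconcile which value is correct before assembling the final formula. Finally, setting $p=\tfrac12$ (so both weights equal $\tfrac12$) yields Theorem~\ref{thm:expectedvalue}.
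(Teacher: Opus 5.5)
Your plan is the paper's own proof. It uses linearity of expectation over the $\sigma_i$ in Theorem~\ref{thm:Snforheterogeneous} (with $T_{2n}$ independent of the configuration) and $\mathbb{E}[\sigma_i]=2p^2-2p+1$. It then reuses the limits (\ref{eqn:homogenval}), (\ref{eqn:secondpart}), (\ref{eqn:firstpart}) and (\ref{eqn:oddfinal}). All of your even-case constants are correct. That includes subtracting the $i=0$ term $\tfrac{9\sqrt5-15}{10}$ from (\ref{eqn:homogenval}) to get $\tfrac{95-39\sqrt5}{50}$, a step the paper leaves implicit.

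Your doubt about the odd $\sigma_i=0$ constant is justified, and it resolves quickly; no exponent re-derivation is needed. Since $T_{2(m-i)+1}T_{2(m-i)}\sim\alpha^2\phi^{8m-8i-2}$ and $T_{4m+2}\sim\alpha\phi^{8m+2}$, the per-term limit $\alpha\phi^{-8i-4}$ in (\ref{eqn:oddfinal}) is right. Moreover $1-\phi^{-8}=\phi^{-4}(\phi^{4}-\phi^{-4})=3\sqrt5\,\phi^{-4}$, so $(1-\phi^{-8})^2=45\phi^{-8}$. Hence
\[
\sum_{i\ge 0}(8i+8)\,\alpha\,\phi^{-8i-4}=\frac{8\alpha\phi^{-4}}{(1-\phi^{-8})^2}=\frac{8\phi^{4}}{45\sqrt5}=\frac{60+28\sqrt5}{225}\approx 0.544933 .
\]

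The $68$ in the statement is therefore a typo for $60$. The paper's proof and (\ref{eqn:finaloddratio}) repeat it. However, the paper's own numbers $0.544933$ and $0.6066$ are consistent only with $60$: $0.6066$ is the average of $0.668328$ and $0.544933$, whereas $68$ would give about $0.6244$ at $p=\tfrac12$.

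As literally written, the odd case is false for $0<p<1$. What your argument proves, and what the paper's proof actually establishes, is the formula with $\tfrac{60+28\sqrt5}{225}$ in place of $\tfrac{68+28\sqrt5}{225}$. You should state and prove that corrected version.
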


\begin{proof}
   Recall that $\sigma_i = 1$ if the $i$th and $(n  - i)$th square have the same type of diagonal and $\sigma_i = 0$ otherwise. With standard calculations, we observe the following: 
\[
\mathbb{P}(\sigma_i) = 
\begin{cases} 
p^2 + (1 - p)^2 & \text{when } \sigma_i = 1\\ 
2p(1 - p) & \text{when } \sigma_i = 0 .
\end{cases}
\]
From this, we see that, 
\begin{equation}\label{eqn:expectedsigma}
    \mathbb{E}[\sigma_i] = p^2 + (1 - p)^2 = 2p^2 - 2p + 1
    \quad \text{and} \quad
    1 - \mathbb{E}[\sigma_i] = 2p - 2p^2.
\end{equation}

With this, we proceed to the calculation of the expected value of $\frac{S_{2n}}{T_{2n}}$ as $n$ limits to infinity. We deal separately with the odd and even cases. 

\begin{proof}[Even Case] 
    For $n = 2m$ using \Cref{thm:Snforheterogeneous}, (\ref{eqn:expectedsigma}), and linearity of expectation, we have the following: 

\begin{equation}\label{eqn:expectedvalfrac}
    \begin{split}
\mathbb{E}\bigg[\frac{S_{2n}}{T_{2n}}\bigg] = \frac{4m - 1}{T_{4m} }+ \frac{3T_{2m}^2}{T_{4m}} +\sum_{i=1}^{m-1}[(2p^2 - 2p + 1)((4i+1)T_{2(m-i)+1}^2+(4i+3)T_{2(m-i)}^2) \\
+(2p - 2p^2)(8i+4)T_{2(m-i)+1}T_{2(m-i)}].
\end{split}
\end{equation}
So, from (\ref{eqn:expectedvalfrac}), (\ref{eqn:homogenval}), (\ref{eqn:secondpart}), and (\ref{eqn:firstpart}) and standard limit calculations, we obtain the following: 

\[\lim_{n \to \infty}\mathbb{E}\bigg[\frac{S_{2n}}{T_{2n}}\bigg] = \frac{9\sqrt{5} - 15}{10} + (2p^2 - 2p + 1)\frac{95 - 39\sqrt{5}}{50} + (2p - 2p^2)\frac{210 - 82\sqrt{5}}{225}.\]

\end{proof}

\begin{proof}[Odd Case] 
    For $n = 2m + 1$ using \Cref{thm:Snforheterogeneous}, (\ref{eqn:expectedsigma}), and linearity of expectation, we have the following:

\begin{equation}\label{eqn:expectedvalfracodd}
    \begin{split}
\mathbb{E}\bigg[\frac{S_{2n}}{T_{2n}}\bigg] = \frac{4m + 1}{T_{4m + 2} }+\sum_{i=0}^{m-1}[(2p^2 - 2p + 1)((4i+3)T_{2(m-i)+1}^2+(4i+5)T_{2(m-i)}^2) \\
+(2p - 2p^2)(8i+8)T_{2(m-i)+1}T_{2(m-i)}].
\end{split}
\end{equation}
So, from (\ref{eqn:expectedvalfracodd}), (\ref{eqn:homogenval}),  (\ref{eqn:oddfinal}) and standard limit calculations, we obtain the following: 

\[\lim_{n \to \infty}\mathbb{E}\bigg[\frac{S_{2n}}{T_{2n}}\bigg] = (2p^2 - 2p + 1)\frac{10 + 3\sqrt{5}}{25} + (2p - 2p^2)\frac{68 + 28\sqrt{5}}{225}.\]

\end{proof}

A case of particular interest occurs when $p = \frac{1}{2}$, representing the scenario where the two possible diagonals in each square face of the $2 \times n$ grid are equally likely to be selected. In such a case, we obtain the following: 
\par
If $n = 2m$, 

\begin{equation}\label{eqn:finalevenratio}
    \lim_{n \to \infty}\mathbb{E}\bigg[\frac{S_{2n}}{T_{2n}}\bigg] = \frac{9\sqrt{5} - 15}{10} + \frac{95 - 39\sqrt{5}}{100} + \frac{210 - 82\sqrt{5}}{450} = 0.6496.
\end{equation}

Similarly, if $n = 2m + 1$, 

\begin{equation}\label{eqn:finaloddratio}
    \lim_{n \to \infty}\mathbb{E}\bigg[\frac{S_{2n}}{T_{2n}}\bigg] = \frac{10 + 3\sqrt{5}}{50} + \frac{68 + 28\sqrt{5}}{450} = 0.6066.
\end{equation}
It is worth noting that these are just the averages of their respective upper and lower bounds.
\end{proof}

\section{Conclusion and Future Directions}


\par

In this paper, we explored $2 \times n$ triangular strip lattices. Future research could focus on $k \times n$ triangular strip lattices where $k \geq 3$. Unlike the case for $k = 2,$ when $k \ge 3,$ the number of spanning trees in a triangular strip lattice depends on its diagonal configuration. For example,  the triangular strip lattices in Figures \ref{fig:3by3with2146spanningtrees} and \ref{fig:3by3with2080spanningtrees} have the same $k$ and $n$ values, but their diagonal configurations are different. Therefore, they have different numbers of spanning trees ({this can be verified with the help of Kirchhoff's Matrix Tree Theorem}), complicating the computations in \Cref{sec:proportion}.
\begin{figure}[H]
    \centering
    \begin{tikzpicture}[x=2.5cm, y=1.5cm, line join=round, line cap=round]

    \definecolor{nodered}{RGB}{255, 0, 0}
    
    \draw[black, line width=1.5pt] (0,2) -- (2,2);
    \draw[black, line width=1.5pt] (0,1) -- (2,1);
    \draw[black, line width=1.5pt] (0,0) -- (2,0);
    
    \draw[black, line width=1.5pt] (0,0) -- (0,2);
    \draw[black, line width=1.5pt] (1,0) -- (1,2);
    \draw[black, line width=1.5pt] (2,0) -- (2,2);
    
    \draw[black, line width=1.5pt] (0,2) -- (1,1);
    \draw[black, line width=1.5pt] (0,1) -- (1,0);
    \draw[black, line width=1.5pt] (1,1) -- (2,2);
    \draw[black, line width=1.5pt] (1,1) -- (2,0);

    \foreach \x in {0,1,2} {
        \foreach \y in {0,1,2} {
            \fill[nodered] (\x, \y) circle (4.5pt);
        }
    }

\end{tikzpicture}
    \caption{A $3 \times 3$ triangular strip lattice with 2146 possible spanning trees.}
    \label{fig:3by3with2146spanningtrees}
\end{figure}
\begin{figure}[H]
    \centering
    \begin{tikzpicture}[x=2.5cm, y=1.5cm, line join=round, line cap=round]

    \definecolor{nodered}{RGB}{255, 0, 0}
    
    \draw[black, line width=1.5pt] (0,2) -- (2,2);
    \draw[black, line width=1.5pt] (0,1) -- (2,1);
    \draw[black, line width=1.5pt] (0,0) -- (2,0);
    
    \draw[black, line width=1.5pt] (0,0) -- (0,2);
    \draw[black, line width=1.5pt] (1,0) -- (1,2);
    \draw[black, line width=1.5pt] (2,0) -- (2,2);
    
    \draw[black, line width=1.5pt] (0,2) -- (1,1);
    \draw[black, line width=1.5pt] (0,1) -- (1,0);
    \draw[black, line width=1.5pt] (1,2) -- (2,1);
    \draw[black, line width=1.5pt] (1,1) -- (2,0);

    \foreach \x in {0,1,2} {
        \foreach \y in {0,1,2} {
            \fill[nodered] (\x, \y) circle (4.5pt);
        }
    }

\end{tikzpicture}
    \caption{A $3 \times 3$ triangular strip lattice with 2080 possible spanning trees.}
    \label{fig:3by3with2080spanningtrees}
\end{figure}
A problem of interest would be to find the diagonal configuration that would result in the maximum and minimum number of spanning trees. In \cite{wang2023triangular}, Wang found an upper and lower bound for the possible number of spanning trees $T_n$ as $n$ varies for the case where all the diagonals in the squares are of the same type (see Figure \ref{fig:3by3with2080spanningtrees} for one of two possibilities for $n = 3$). 

\[(32.4)^n \leq T_n \leq (81.3)^n.\]

\par 

We now discuss some complications that arise when calculating the number of balanced spanning trees in a particular $k \times n$ triangular strip lattice. For $k = 2$, we used dual loops to count the number of balanced spanning trees. Recall that the first step for doing so involves finding balanced partitions (see Claim~\ref{claim:loops}) of vertices in $k \times n$ grid graphs. For $k = 2$, we had exactly $n$ partitions which were straightforward to enumerate. For $k = 3$, the number of partitions is very complex. Specifically, the number of 
balanced partitions $a_n$, as derived in \cite{dc7c0db6-9a45-301d-9d83-89b4d73201a1}, for a $3 \times 2n$ grid graph is: 
\begin{align*}
a_{n} = -3 + n - n^2 &- \frac{1}{5}[(n - 5)F_{3n + 1} + (2n - 1)F_{3n}]\\
\quad &+ \frac{1}{5}\sum_{m = 0}^n \binom{2(n - m)}{n - m}[(3m + 5)F_{3m + 1} - (4m + 3)F_{3m}]
\end{align*} where $F_k$ is, as before, the Fibonacci sequence.
Due to this being a very complex sum, it is  more difficult to enumerate the partitions which is a crucial step in counting the number of balanced spanning trees in both grid graphs and triangular strip lattices using our methods. 
\bibliographystyle{alpha}
\bibliography{main}

\end{document}